\documentclass[reqno, 10pt]{amsart}
\usepackage{amsmath}
\usepackage{upgreek}
\usepackage{amssymb}
\usepackage{mathrsfs}
\usepackage{amsthm}
\usepackage{multirow, multicol}

\usepackage{textcomp}
\usepackage{times}

\usepackage[scr=boondox]{mathalfa}

\usepackage{xcolor}
\usepackage{srcltx}
\usepackage{mathtools}
\usepackage{thmtools}
\usepackage[colorlinks,allcolors=blue!75!black]{hyperref}
\usepackage{enumitem}

\usepackage[left=1in, right=1in, top=1.25in, bottom=1.25in]{geometry}

\usepackage{soul}

\allowdisplaybreaks

\newcommand{\bbC}{\mathbb{C}}
\newcommand{\bbH}{\mathbb{H}}
\newcommand{\bbL}{\mathbb{L}}
\newcommand{\bbN}{\mathbb{N}}
\newcommand{\bbV}{\mathbb{V}}
\newcommand{\bbW}{\mathbb{W}}

\def\bA{\mathbf{A}}
\def\bB{\mathbf{B}}
\def\bC{\mathbf{C}}
\def\bD{\mathbf{D}}

\def\bF{\mathbf{F}}
\def\bH{\mathbf{H}}
\def\bI{\mathbf{I}}
\def\bL{\mathbf{L}}

\def\bP{\mathbf{P}}
\def\bR{\mathbf{R}}
\def\bQ{\mathbf{Q}}
\def\bT{\mathbf{T}}
\def\bS{\mathbf{S}}
\def\bV{\mathbf{V}}
\def\bW{\mathbf{W}}

\newcommand{\G}{\Gamma}
\newcommand{\nablaG}{\nabla_\G}
\newcommand{\nablaM}{\nabla_M}
\newcommand{\divG}{\operatorname{div}_\G}
\newcommand{\divM}{\operatorname{div}_M}
\newcommand{\DeltaG}{\Delta_\G}
\newcommand{\DeltaM}{\Delta_M}
\newcommand{\DeltaS}{\Delta_S}

\def\be{\mathbf{e}}
\def\bg{\mathbf{g}}
\def\bh{\mathbf{h}}

\def\bu{\mathbf{u}}
\def\bv{\mathbf{v}}
\def\bw{\mathbf{w}}
\def\bx{\mathbf{x}}

\def\bchi{\boldsymbol{\chi}}

\def\bvarphi{\boldsymbol{\varphi}}
\def\bsigma{\boldsymbol{\sigma}}
\def\bnu{\boldsymbol{\nu}}

\def\btau{\boldsymbol{\tau}}

\def\bPsi{\boldsymbol{\Psi}}
\def\bSigma{\boldsymbol{\Sigma}}

\def\bun{\bu^{(n)}}
\def\bvi{\bv^{(i)}}
\def\bvn{\bv^{(n)}}
\def\bvk{\bv^{(k)}}

\def\bwN{\bw^{(N)}}
\def\bAi{\bA^{(i)}}
\def\bAm{\bA^{(m)}}
\def\bAn{\bA^{(n)}}
\def\bAl{\bA^{(l)}}
\def\bQn{\bQ^{(n)}}
\def\bPsiN{\bPsi^{(N)}}

\def\wtbAn{\wt\bA^{(n)}}
\def\wtbAm{\wt\bA^{(m)}}

\def\sP{\mathscr{P}}

\renewcommand{\bf}{\mathbf{f}}

\newcommand{\wt}{\widetilde}

\newcommand{\BB}{\mathrm{B}}
\newcommand{\CC}{\mathrm{C}}
\newcommand{\HH}{\mathrm{H}}
\newcommand{\LL}{\mathrm{L}}
\newcommand{\NN}{\mathrm{N}}
\newcommand{\RR}{\mathrm{R}}
\renewcommand{\SS}{\mathrm{S}}
\newcommand{\VV}{\mathrm{V}}
\newcommand{\WW}{\mathrm{W}}

\newcommand{\cP}{\mathcal{P}}
\newcommand{\cQ}{\mathcal{Q}}
\newcommand{\cU}{\mathcal{U}}
\newcommand{\cV}{\mathcal{V}}
\newcommand{\cW}{\mathcal{W}}

\newcommand{\dd}[1][y]{\if#1y\,\fi{\mathrm d}}

\DeclareMathOperator{\vspan}{span}

\renewcommand{\div}{\operatorname{div}}

\newcommand{\tr}{\operatorname*{tr}}

\newcommand{\ELdG}{E_{\mathrm{LdG}}}

\theoremstyle{plain}

\newtheorem{thm}{Theorem}[section]
\newtheorem{proposition}[thm]{Proposition}
\newtheorem{lemma}[thm]{Lemma}
\newtheorem{corollary}[thm]{Corollary}
\newtheorem{definition}[thm]{Definition}
\newtheorem{remark}[thm]{Remark}

\theoremstyle{definition}
\newtheorem{appendix-proposition}{Proposition}[section]

\theoremstyle{remark}
\newtheorem{appendix-remark}{Remark}[section]

\def\itemautorefname~#1\null{%
  (#1)\null
}

\newenvironment{proof-thm}[1]{\vskip1em \noindent\textbf{Proof of \autoref#1.}}{\hfill$\square$\\}

\makeatletter
\renewcommand\subsection{\@startsection{subsection}{2}%
  \z@{.5\linespacing\@plus.7\linespacing}{.1\linespacing}%
  {\normalfont\centering\scshape}}
\makeatother

\makeatletter
\renewcommand\subsubsection{\@startsection{subsubsection}{3}%
  \z@{.5\linespacing\@plus.7\linespacing}{.1\linespacing}%
  {\normalfont\centering\scshape}}
\makeatother

\makeatletter
\def\@tocline#1#2#3#4#5#6#7{\relax
  \ifnum #1>\c@tocdepth 
  \else
    \par \addpenalty\@secpenalty\addvspace{#2}%
    \begingroup \hyphenpenalty\@M
    \@ifempty{#4}{%
      \@tempdima\csname r@tocindent\number#1\endcsname\relax
    }{%
      \@tempdima#4\relax
    }%
    \parindent\z@ \leftskip#3\relax \advance\leftskip\@tempdima\relax
    \rightskip\@pnumwidth plus4em \parfillskip-\@pnumwidth
    #5\leavevmode\hskip-\@tempdima
      \ifcase #1
       \or\or \hskip 1em \or \hskip 2em \else \hskip 3em \fi%
      #6\nobreak\relax
    \dotfill\hbox to\@pnumwidth{\@tocpagenum{#7}}\par
    \nobreak
    \endgroup
  \fi}
\makeatother

\newenvironment{proof-claim}{\textit{Proof of Claim.}}{\hfill$\square$}

\begin{document}
\title{Existence of weak solutions of the surface Beris--Edwards model}
\author{Gonzalo A.~Benavides \and Ricardo H.~Nochetto \and Mansur Shakipov}

\address{Department of Mathematics, University of Maryland College Park, MD 20742}
\email{gonzalob@umd.edu, rhn@umd.edu, shakipov@umd.edu}

\subjclass[2020]{76D03, 76A15, 35R01, 35D30, 35Q35}
\keywords{weak solution, liquid crystal, PDEs on manifolds, Beris--Edwards, Faedo--Galerkin}

\begin{abstract}
We prove the existence of weak solutions to the \emph{surface Beris--Edwards} model for nematic liquid crystals posed on a $d$-dimensional ($d \in \{2,3\}$) closed hypersurface of class $C^{2,1}$.
This \emph{thermodynamically consistent} model, recently introduced by Bouck, Nochetto and Yushutin (2024), couples the incompressible \emph{tangent Navier--Stokes} equations with a kinematic equation for the Q-tensor field that encodes the orientation of the liquid crystal particles with a general state of orientational order.
Extending ideas by Abels, Dolzmann and Liu (2014) and Guillén-González and Rodríguez-Bellido (2015) for the Beris--Edwards model in flat domains, we design a Faedo--Galerkin scheme based upon eigenfunctions of an appropriate tangent Stokes operator and tensor-valued Laplace--Beltrami operator and recover a weak solution via standard compactness arguments.
\end{abstract}

\maketitle

\tableofcontents

\section{Introduction}
Nematic Liquid Crystals (NLCs) are materials that exhibit properties from both conventional liquids and solid crystals within a certain temperature range \cite{Virga1994}.
They are characterized by having elongated rod-shaped particles with a head-to-tail symmetry, that do not possess positional order but retain orientational order.
Consequently, NLCs flow similarly to traditional fluids while enjoying optical properties typical of crystals such as light polarization, which is affected by point defects \cite{BrinkmanCladis1982}.
These are small regions in the NLC material that do not have a well-defined average orientation of the molecules; mathematically, they are represented by singularities of the associated LC director field.

NLCs fall within a broader category of materials called \emph{active matter}.
Active matter possesses microscopic constituents that consume energy in order to produce forces that overall lead to macroscopic dynamics.
In particular, nematic systems have been observed to exhibit self-similar organization properties \cite{HashemiEtAl2017}.
Point defects, in turn, can travel in time and merge or annihilate each other depending on their topological charges \cite{ToothDenniston2022,KeberEtAl2014}.
Furthermore, it was hypothesized in \cite{Nelson2002} and later observed in \cite{LagerwallScalia2012} that point defects in thin LC shells can mimic colloidal chemistry.
Due to their unique mechanical and optical properties, they are widely used in, for instance, the manufacturing of liquid crystal displays (LCDs) \cite{lc_displays}, soft robotics (liquid crystal elastomers) \cite{lce_soft_robotics_science}, and solar energy \cite{solar_energy_nature}.
Further applications of active nematic systems include \cite{hoffman2022, wang2023, vafa2022} for modeling the morphogenesis of simple organisms like the hydra, active gel systems \cite{ProstJulicherJoanny2015},
cytoskeletons \cite{SuzukiEtAl2017}, bacterial suspensions \cite{WiolandEtAl2013} and filaments \cite{CaldererGolovaty:21}.
We refer to \cite{NeedlemanDogic2017,DoostmohammadiEtAl2018} for a thorough discussion.

Due to intrinsic limitations of  NLC experiments, it is of paramount importance to obtain mathematical models that are numerically tractable but rich enough to capture complex behavior.
Simplifications to 2D models for thin (but curved) NLC shells and droplets are not satisfactory because they do not account for the non-negligible effect of curvature \cite[\textsection 2.2]{LagerwallScalia2012}.
It is therefore necessary to consider models for NLCs on surfaces.
Examples for which these models are relevant are given by domains with a high length-to-width ratio which are wide enough to freely accommodate rod-like NLC particles but thin enough to neglect variations of their orientation along the thickness.

The most popular approach within continuum mechanics for modeling materials with orientational order is the Q-tensor theory \cite{de1993physics,BorthagarayWalker2021,mottram2014introduction}.
Essentially, the idea is to associate a probability density function $\rho$ defined on the unit sphere $\SS^d$ which encodes the statistical distribution of the orientations of the LC molecules at every Eulerian point $\bx \in \RR^{d+1}$.
Due to the head-to-tail symmetry assumption on the LC molecules, the first-order moments vector of $\rho$ vanishes.
Therefore, the first non-trivial information about the LC particle distribution is given by the second-order moments matrix of $\rho$.
By substracting an appropriate multiple of the identity matrix, one obtains a symmetric and traceless matrix $\bQ$ known as the \emph{Q-tensor} that measures the deviation of the LC from the isotropic state.
The most physically relevant quantities derived from $\bQ$ are its eigenframe and corresponding eigenvalues.

Hydrodynamic NLC models on surfaces are a relatively new area of research.
To the best of our knowledge, only three articles \cite{NestlerVoigt2022,BouckNochettoYushutin2024,NitschkeVoigt2025} have considered the coupling of Q-tensor kinematics with a momentum equation (aka \emph{Beris--Edwards} model) for curved liquid crystal films.
Chronologically, the first model was proposed by Nestler and Voigt in \cite{NestlerVoigt2022} in which the Q-tensor field $\bQ$ is assumed to be \emph{conforming} (i.e.~the normal to the surface is assumed to be an eigenvector of $\bQ$);
however, the overall system lacks an energy law.
Later, Bouck, Nochetto and Yushutin derived in \cite{BouckNochettoYushutin2024} the first Beris--Edwards model for NLCs on stationary surfaces that allows for arbitrarily oriented liquid crystal particles (that is, the eigenframe of $\bQ$ is not anchored to the tangent plane of the surface).
The model is obtained by means of the \emph{Generalized Onsager's principle} \cite{Doi2011,Doi2015}, whence it is \emph{thermodynamically consistent}, and hence it obeys an energy law.
In turn, Nitschke and Voigt proposed in \cite{NitschkeVoigt2025} a more general thermodynamically consistent framework for hydrodynamics of NLCs that allows, among other features, for evolving surfaces;
their arguments are based upon the \emph{Lagrange d'Alembert's principle} of classical mechanics.
We also find it important to mention the recent work \cite{NitschkeVoigt2025-arxiv} in which, among other things, the Beris--Edwards framework \cite[\textsection 3.1]{NitschkeVoigt2025} is utilized to derive a hydrodynamic model for symmetric lipid bilayers.

The goal of this paper is to establish the existence of weak solutions for the aforementioned surface Beris--Edwards model \cite{BouckNochettoYushutin2024}.
As far as the authors are concerned our work is the first one to address the solvability of a hydrodynamic model for NLCs on surfaces.
Recognizing the structural similarities of the surface Beris--Edwards model \cite{BouckNochettoYushutin2024} with its flat-domain counterpart \cite{BerisEdwards1994}, we adapt ideas by Abels, Dolzmann and Liu \cite{ADL2014} and Guillén-González and Rodríguez-Bellido \cite{GG-RB2015} and design a Faedo--Galerkin method based upon eigenfunctions of appropriate tangent Stokes and tensor-valued Laplace--Beltrami operators.
We establish discrete a-priori estimates that are uniform in the degree of approximation and recover a weak solution via standard compactness arguments.

\subsection{Governing equations}\label{sec:gov-eqs}
Let $\G \subset \RR^{d+1}$ ($d=2,3$) be a closed, compact and connected hypersurface of sufficient regularity (to be specified later).
Associated to $\G$, let $\bnu = (\nu_i)_{i=1}^{d+1}: \G \rightarrow \RR^{d+1}$ denote the outward unit normal vector field to $\G$, let $\bP := \bI - \bnu \otimes \bnu$ be the projection matrix onto the tangent plane and $\bB :=  \nabla_M \bnu$ be the \emph{Weingarten} map (aka \emph{shape operator}).
\autoref{tab:diff-ops} summarizes various differential operators defined over $\G$ that will be used throughout this work.
\begin{table}
\begin{tabular}{|c|c|c|c|c|}
\hline
\multirow{2}{*}{Operator name} & \multirow{2}{*}{Notation} & \multicolumn{3}{|c|}{Definition} \\
\cline{3-5}
& & scalar-valued & vector-valued & tensor-valued \\
\hline
extrinsic gradient & $\nablaM$ & $\nablaM v := \bP \nabla v^e$ & $\nablaM \bv := \nabla \bv^e \bP$ & dyadic \\
covariant gradient & $\nablaG$ & $\nablaG v := \nablaM v$ & $\nablaG \bv := \bP \nablaM \bv$ & N/A \\
\hline
extrinsic strain rate & $\bD_M$ & N/A & $\bD_M \bv := (\nabla_M \bv + \nabla_M^T \bv)/2$ & N/A \\
extrinsic spin tensor & $\bW_M$ & N/A & $\bW_M \bv := (\nabla_M \bv - \nabla_M^T \bv)/2$ & N/A \\
covariant strain rate & $\bD_\G$ & N/A & $\bD_\G\bv := (\nablaG \bv + \nablaG^T \bv)/2$ & N/A \\
covariant spin tensor & $\bW_\G$ & N/A & $\bW_\G \bv := (\nablaG \bv - \nablaG^T \bv)/2$ & N/A \\
\hline
extrinsic divergence & $\divM$ & N/A & $\divM := \tr \nablaM$ & row-wise \\
covariant divergence & $\divG$ & N/A & $\divG := \tr \nablaG = \divM$ & row-wise \\
\hline 
Laplace--Beltrami & $\DeltaM$ & \multicolumn{3}{|c|}{$\DeltaM := \divM \nablaM$ (defined componentwise)} \\
\cline{3-5}
Surface diffusion & $\DeltaS$ & N/A & $\DeltaS := \bP \divG \bD_\G$ & N/A \\
\hline
\end{tabular}
\vspace{5mm}
\caption{Differential operators on $\G$.
Here, $(\cdot)^e$ denotes the constant normal extension into a tubular neighborhood around $\G$, and N/A means either that the operator does not make sense or that it shall not be used in this work.}
\label{tab:diff-ops}
\end{table}

Let $\bu$ be a tangential vector field to $\G$ and $\bQ$ a Q-tensor.
\begin{itemize}
\item The \textit{double-well} potential of a given Q-tensor $\bQ$ is given by
\begin{equation}\label{doublewell}
F[\bQ] := \frac{a}{2} |\bQ|^2 - \frac{b}{3} \bQ : \bQ^2 + \frac{c}{4} |\bQ|^4,
\end{equation}
where $a$, $b$ and $c$ are constants, with $c > 0$.

\item Let $\cP$ be the orthogonal projection onto the space of symmetric and traceless matrices.
Then,
\begin{equation}\label{proj-F'}
\cP F'[\bQ] = a \bQ - b \bQ^2 + \frac{b}{3} \tr(\bQ^2) \bI + c \tr(\bQ^2) \bQ,
\end{equation}
where $F'$ denotes the first variation of $F$.
\item The \emph{molecular field} associated to the Q-tensor $\bQ$ is the symmetric and traceless matrix given by
\begin{equation}\label{H-def}
\bH[\bQ] := \cP\left( L \div_M \nabla_M \bQ - F'[\bQ] \right),
\end{equation}
where $L>0$ is a material constant.

\item The \emph{star spin tensor}
\begin{equation}\label{starspintensor}
\bW_* \bu := \bB \bu \otimes \bnu - \bnu \otimes \bB \bu.
\end{equation}
\item The \emph{corotation} tensor $\bS[\bQ,\bu] := \bS_\G[\bQ,\bu] + \bS_*[\bQ,\bu]$ is given by the sum of the \emph{covariant corotation} and \textit{star corotation} tensors, correspondingly defined by
\begin{equation}\label{S-def}
\bS_\G[\bQ,\bu] := \bW_\G(\bu) \bQ - \bQ \bW_\G(\bu),
\qquad \bS_*[\bQ,\bu] := \bW_*(\bu) \bQ - \bQ \bW_*(\bu).
\end{equation}

\item The \emph{Ericksen stress} $\bSigma[\bQ]$ and the \emph{tangential Ericksen stress} $\bSigma_\G$ are skew-symmetric tensors respectively defined as follows
\begin{equation}\label{Sigma-def}
\bSigma[\bQ] := \bQ \bH[\bQ]- \bH[\bQ] \bQ,
\qquad \bSigma_\G[\bQ] := \bP \bSigma[\bQ] \bP.
\end{equation}
\item The \emph{surface Beris--Edwards} and \emph{star} forces are respectively given by
\begin{equation}
\bf_\G[\bQ] := - \bP \divG \bSigma_\G[\bQ] + \bH[\bQ] : \nabla_M \bQ, \qquad
\bf_*[\bQ] := 2 \bB \bSigma[\bQ] \bnu.
\end{equation}
\end{itemize}
The Surface Beris--Edwards model \cite{BouckNochettoYushutin2024} reads:
find a tangential vector vector field $\bu$, a Q-tensor $\bQ$ and a scalar pressure $\uppi$ that satisfy the following evolutionary system of surface PDEs respectively given by the \emph{kinematics of Q-tensor}, \emph{linear momentum} and \emph{incompressibility} equations:
\begin{subequations}\label{eq:BE-system}
\begin{align}
\label{LC-kinematics} \partial_t \bQ + (\nabla_M \bQ) \bu & = M \bH[\bQ] + \bS[\bQ,\bu],\\
\label{momentum-eq}\rho \left( \partial_t \bu + (\nablaG \bu) \bu + \nablaG \uppi \right) & = 2\mu \bP \divG \bD_\G\bu - (\bf_\G[\bQ] + \bf_*[\bQ]),\\
\label{incompressibility} \divG \bu & = 0,
\end{align}
\end{subequations}
where the \emph{mobility} $M$, viscosity $\mu$, and density $\rho$ are positive material constants.
These equations are supplemented with the initial conditions
\begin{equation}\label{BE-in_con}
\bu(0) = \bu_0 \quad \text{and} \quad \bQ(0) = \bQ_0.
\end{equation}
System \eqref{eq:BE-system} was derived in \cite{BouckNochettoYushutin2024} by means of the Onsager's principle \cite{Doi2011,Doi2015}, making it thermodynamically consistent.
For that reason, despite its intimidating appearance, \eqref{eq:BE-system} formally admits an energy law given by
\begin{equation}\label{eq:sbe-energy-law}
\frac{\dd}{\dd t} \Big(K[\bu] + \ELdG[\bQ]\Big) = - 2 \mu \|\bD_\G\bu\|^2 - M \|\bH\|^2,
\end{equation}
where $K[\bu]$ and $\ELdG[\bQ]$ are, respectively, the \emph{kinetic} and \emph{Landau--de Gennes} energies defined as follows:
\begin{equation}\label{eq:LandaudeGennes&kinetic}
K[\bu] := \int_\G \frac{\rho}{2} |\bu|^2, \qquad \ELdG[\bQ] := \int_\G \frac{L}{2} |\nablaM \bQ|^2 + F[\bQ].
\end{equation}
\subsection{Related models in flat domains}
The Beris--Edwards model in flat domains has been thoroughly analyzed in subdomains of the Euclidean space $\RR^d$ and with or without incorporating the so-called \emph{ratio of tumbling and alignment} parameter $\xi \in \RR$ within the model.
Existence of weak solutions and higher regularity in the whole Euclidean space have been studied for $d=2,3$ by Paicu and Zarnescu \cite{PaicuZarnescu2011,PaicuZarnescu2012} for sufficiently small $\xi$ and by Wilkinson \cite{Wilkinson2015} for general $\xi$.
Paicu and Zarnescu also established weak-strong uniqueness for $d=2$.

Similar results for the half-space were established by Barbera, Murata and Shibata \cite{BarberaEtAl2025}, for bounded domains---by Abels, Dolzmann and Liu \cite{ADL2014,ADL2016}, Guillén-González and Rodríguez-Bellido \cite{GG-RB2014,GG-RB2015,GG-RB2015b}, Xiao \cite{Xiao2017}, Chen and Terraneo \cite{ChenTerraneo2025} and Hieber, Wussein and Wrona \cite{HieberHusseinWrona2024}---and for the torus---by Cavaterra, Rocca, Wu and Xu \cite{CRWX16}.
In turn, more complex models obtained by considering different potentials than the double-well \eqref{doublewell} have also been considered, e.g., \cite{Wilkinson2015} for a singular potential and \cite{HuangDing2015,YuningWang2018} for an anisotropic potential.

For a more detailed up-to-date discussion of the analysis of the Beris--Edwards and related models in flat domains we refer to \cite[pp.~2-3]{BarberaEtAl2025}, \cite[pp.~3-4]{HieberHusseinWrona2024} and \cite[pp.~2-3]{ChenTerraneo2025}.
\subsection{Main results}

In this section we present our results concerning the solvability of problem \eqref{eq:BE-system}, but postpone their proofs to \autoref{sec:proof-existence} and \autoref{sec:proof-recovery_pressure}.
We begin by introducing some functional analytic concepts that are required to state our main results.

For the manifold $\G$ of sufficiently regularity, we adopt the standard notation for Lebesgue spaces $\LL^p$ and \emph{manifold Sobolev spaces} $\WW^{m,p}$ for $m \geq 0$ and $p \in [1,\infty)$, whose corresponding norms are denoted respectively by $\|\cdot\|_{0,p}$ and $\|\cdot\|_{m,p}$.
We also write $\HH^m := \WW^{m,2}$ with norm $\|\cdot\|_m := \|\cdot\|_{m,2}$; for $m=0$ we simply write $\|\cdot\|$.
Since $\G$ is compact, Sobolev embeddings (Gagliardo–Nirenberg–Sobolev, Morrey's and Rellich--Kondrachov) behave just like in flat domains \cite{HebeyRobert2008}.
For a systematic discussion of Sobolev spaces on manifolds of limited regularity we refer to, e.g., \cite[Section 3]{BenavidesNochettoShakipov2025-a}.

Throughout this article, for any integrability parameter $p \in (1,\infty)$, we denote by $p^* := \frac{p}{p-1}$ its Lebesgue conjugate exponent;
notice that $p^{**} = p$.
Also, $p \in (1,2)$ if and only if $p^* \in (2,\infty)$.
For any pair of functions $v \in \LL^p$ and $w \in \LL^{p^*}$ we write $(v,w)_\G := \int_\G v \, w$.

For any scalar function space $\VV$ defined on $\G$, we denote by $\bV$ and $\bbV$ its $(d+1)$-vectorial and $(d+1)\times(d+1)$ tensorial counterparts, and use the same notation for their respective inner products and/or norms.
Finally, for any normed space $V$ its dual space is denoted by $V'$, whose induced norm is given by $\|f\|_{V'} := \sup_{0 \neq v \in V} \frac{f(v)}{\|v\|_V}$.
We denote the duality pairing between $V'$ and $V$ by $\langle \cdot, \cdot \rangle_{V' \times V}$;
when there is no ambiguity, we will simply write $\langle \cdot,\cdot \rangle$.
Let $\BB\CC_w([0,T]; V)$ denote the space of weakly-continuous functions defined on $[0,T]$ with values in $V$.

Finally, for $m \in \NN \cup \{0\}$, we introduce the following spaces
\begin{align*}
\WW^{m,p}_\# & := \{\textstyle  q \in \WW^{m,p}: \int_\G q = 0 \},\\
\bW^{m,p}_t & := \{ \bv \in \bW^{m,p}: \bv \cdot \bnu = 0 ~\text{a.e.~on $\G$} \},\\
\bW^{m,p}_{t,\sigma} & := \{\textstyle  \bv \in \bW^{m,p}_t: \int_\G \bv \cdot \nablaG q = 0, \, \forall q \in \WW^{1,p^*} \},\\
\bbW^{m,p}_S & := \{\bA \in \bbW^{m,p}: \bA = \bA^T, \tr(\bA) = 0~\text{a.e.~on $\G$}\}.
\end{align*}
Sobolev spaces on $\G$ satisfy integration-by-parts formula that highly resemble their flat-domain counterparts.
In particular, we have the following set of results.
\begin{proposition}[{\cite[Proposition 3.9]{BenavidesNochettoShakipov2025-a}} scalar-vector integration-by-parts formula]
If $\G$ is of class $C^{1,1}$, then for each $u \in \WW^{1,1}(\G)$ and $\bvarphi \in \bC^1(\G)$ it holds that
\begin{equation}\label{eq:int-by-parts}
(u,\divG \bvarphi)_\G = - (\nablaG u, \bvarphi)_\G + (\tr(\bB) u , \bvarphi \cdot \bnu)_\G
\end{equation}  
\end{proposition}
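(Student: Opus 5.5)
We prove \eqref{eq:int-by-parts} by first establishing it for smooth $u$ via the surface divergence theorem, and then extending to all $u \in \WW^{1,1}(\G)$ by density. The starting point is the tangential divergence theorem on a closed $C^{1,1}$ hypersurface: for any tangential Lipschitz field $\bw$ (i.e.\ $\bw\cdot\bnu = 0$), one has $\int_\G \divG \bw = 0$. For a general, not necessarily tangential, field $\bw \in \bC^{0,1}(\G)$ we split $\bw = \bP\bw + (\bw\cdot\bnu)\bnu$. Using $\divG \bnu = \tr(\nablaM \bnu) = \tr(\bB)$ together with the fact that $\nablaG(\bw\cdot\bnu)$ is tangential, hence orthogonal to $\bnu$, we obtain
\[
\divG\big((\bw\cdot\bnu)\bnu\big) = (\bw\cdot\bnu)\tr(\bB).
\]
This gives the general identity
\[
\int_\G \divG \bw = \int_\G \tr(\bB)\, \bw\cdot\bnu .
\]
Here $C^{1,1}$ regularity is exactly what makes $\bnu$ Lipschitz and $\bB \in \mathbb{L}^\infty$, so that all terms above make sense.

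Next, let $u \in C^1(\G)$ and $\bvarphi \in \bC^1(\G)$, and apply this identity to $\bw = u\bvarphi$. The product rule gives
\[
\divG(u\bvarphi) = u\divG\bvarphi + \nablaG u\cdot\bvarphi,
\]
where $\nablaG u\cdot \bvarphi = \nablaG u \cdot \bP\bvarphi$ because $\nablaG u$ is tangential. Substituting and rearranging yields \eqref{eq:int-by-parts} for smooth $u$.

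Finally, we pass to $u \in \WW^{1,1}(\G)$ by density: $C^1(\G)$, or Lipschitz functions, are dense in $\WW^{1,1}(\G)$ on a $C^{1,1}$ manifold, which can be shown via charts and a partition of unity. Each term in \eqref{eq:int-by-parts} is continuous in $u$ with respect to the $\WW^{1,1}$ norm, since $\divG\bvarphi$, $\bvarphi$ and $\tr(\bB)\,\bvarphi\cdot\bnu$ all belong to $\LL^\infty$. Passing to the limit along an approximating sequence therefore gives the formula for all $u \in \WW^{1,1}(\G)$.

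The main obstacle is the first step in limited regularity. The divergence theorem for tangential Lipschitz fields on a $C^{1,1}$ surface requires working in local $C^{1,1}$ parametrizations with a partition of unity. There, $\divG$ of a tangential field coincides with the intrinsic divergence $\frac{1}{\sqrt{g}}\partial_i(\sqrt{g}\,w^i)$, where $\sqrt g$ is only Lipschitz. We would verify this coincidence a.e.\ using Rademacher's theorem, and then integrate by parts in the flat parameter domain, where compact support kills the boundary terms.
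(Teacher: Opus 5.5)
The paper does not prove this proposition; it imports it from \cite[Proposition 3.9]{BenavidesNochettoShakipov2025-a}, so there is no in-paper argument for yours to deviate from. Your proof is correct and self-contained, and it is the standard route.

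The curvature term comes exactly from $\divG\big((\bw\cdot\bnu)\bnu\big) = (\bw\cdot\bnu)\tr(\bB)$. The tangential part integrates to zero via the chart identity $(\divG\bw)\circ\bchi = g^{-1/2}\partial_k(g^{1/2}w^k)$. This identity holds a.e.\ because $g_{ij}$, $g^{ij}$, $\sqrt{g}$ and $w^k$ are Lipschitz, so the product rule and the contracted Christoffel identity $\Gamma^i_{ik} = \partial_k \log\sqrt{g}$ hold a.e. The passage to $u\in\WW^{1,1}$ is legitimate because all three terms are continuous in the $\WW^{1,1}$-norm with $\LL^\infty$ coefficients.

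One point to make explicit when you write it out concerns the meaning of $\nablaG$ for merely Lipschitz fields. Rademacher's theorem applied to the constant normal extension $v^e$ in $\RR^{d+1}$ says nothing on $\G$, which is a Lebesgue null set. Instead, apply Rademacher in the parameter domain. Then use that the closest-point map is differentiable at every point of $\G$ with derivative $\bP$, since the distance function is $C^{1,1}$ near a $C^{1,1}$ hypersurface. This shows that $\bP\nabla v^e$ exists $\mathcal{H}^d$-a.e.\ on $\G$ and agrees with the chart formula you integrate.
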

\begin{corollary}[vector-tensor integration-by-parts formula]
If $\G$ is of class $C^{1,1}$, then for each $\bu \in \bW^{1,1}(\G)$ and $\btau \in \bbC^1(\G)$ it holds that
\begin{equation}\label{covariantIBP-matrices}
(\bu, \divG \btau)_\G = - (\nabla_M \bu , \btau)_\G + (\tr (\bB) \bu, \btau \bnu)_\G.
\end{equation}
\end{corollary}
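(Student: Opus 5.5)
We derive the corollary row by row from the scalar-vector formula \eqref{eq:int-by-parts}. Write $\bu = (u_i)_{i=1}^{d+1}$ and let $\btau_i := (\tau_{ij})_{j=1}^{d+1}$ denote the $i$-th row of $\btau$. Since the divergence of a tensor field is taken row-wise, $(\divG \btau)_i = \divG \btau_i$. Each component $u_i$ belongs to $\WW^{1,1}(\G)$ and each row $\btau_i$ belongs to $\bC^1(\G)$, so \eqref{eq:int-by-parts} applies to every pair $(u_i, \btau_i)$. Summing over $i$ gives
\begin{equation*}
(\bu, \divG \btau)_\G = \sum_{i=1}^{d+1} (u_i, \divG \btau_i)_\G = -\sum_{i=1}^{d+1} (\nablaG u_i, \btau_i)_\G + \sum_{i=1}^{d+1} (\tr(\bB) u_i, \btau_i \cdot \bnu)_\G .
\end{equation*}

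It remains to identify the two sums.

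\textbf{Boundary-type term.} We have $\btau_i \cdot \bnu = (\btau \bnu)_i$, so the last sum equals $(\tr(\bB)\bu, \btau\bnu)_\G$.

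\textbf{Gradient term.} For scalar functions, $\nablaG u_i = \nablaM u_i = \bP \nabla u_i^e$ by \autoref{tab:diff-ops}. For the vector field, $(\nablaM \bu)_{ij} = (\nabla \bu^e \bP)_{ij} = \sum_k \partial_k u_i^e P_{kj} = (\bP \nabla u_i^e)_j$, using that $\bP$ is symmetric. Hence the $i$-th row of $\nablaM \bu$ is exactly $\nablaG u_i$, and
\begin{equation*}
\sum_{i=1}^{d+1} (\nablaG u_i, \btau_i)_\G = (\nablaM \bu, \btau)_\G
\end{equation*}
with the Frobenius inner product. Substituting both identifications into the summed identity yields \eqref{covariantIBP-matrices}.

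The only points needing care are the row-wise conventions, namely that $\divG$ acts on the rows of $\btau$ and that the rows of $\nablaM\bu$ are the tangential gradients of the components $u_i$. Both follow directly from the definitions, so there is no genuine obstacle. Note that the formula involves the extrinsic gradient $\nablaM\bu$, not the covariant $\nablaG\bu$, precisely because $\bu$ is not assumed to be tangential.
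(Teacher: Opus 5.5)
Your proof is correct and is the intended argument: the paper records this as an immediate corollary of \eqref{eq:int-by-parts}, obtained by applying the scalar formula to each pair $(u_i,\btau_i)$, summing over $i$, and using $\btau_i\cdot\bnu=(\btau\bnu)_i$ together with the fact that the rows of $\nablaM\bu$ are $\nablaG u_i$. Your closing remark is inaccurate, although this does not affect the proof: tangency of $\bu$ would not let you replace $\nablaM\bu$ by $\nablaG\bu$, because for tangential $\bu$ one still has $\nablaM\bu=\nablaG\bu-\bnu\otimes\bB\bu$, so the replacement needs a condition on $\btau$ such as $\bP\btau=\btau$, which is how the paper applies the formula in its appendix.
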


\begin{corollary}[external integration-by-parts formula]
If $\G$ is of class $C^{1,1}$, then for each $\bsigma \in \bbW^{2,1}(\G)$ and $\btau \in \bbC^1(\G)$ it holds that
\begin{equation}\label{externalIBP-matrices}
(\div_M \nabla_M \bsigma, \btau)_\G = - (\nabla_M \bsigma , \nabla_M \btau )_\G.
\end{equation}
\end{corollary}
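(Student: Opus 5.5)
The plan is to derive \eqref{externalIBP-matrices} from the vector-tensor formula \eqref{covariantIBP-matrices}, applied once to each row of $\bsigma$. Write $\bsigma = (\sigma_{ij})$ and $\btau = (\tau_{ij})$. By the conventions of \autoref{tab:diff-ops}, for tensors $\nabla_M$ acts dyadically and $\div_M$ row-wise. Thus $\nabla_M\bsigma$ is the third-order tensor with entries $(\nabla_M\sigma_{ij})_k$, and $\div_M\nabla_M\bsigma$ is the componentwise Laplace--Beltrami operator, with $ij$ entry $\div_M\nabla_M\sigma_{ij} = \Delta_M \sigma_{ij}$.

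Fix a row index $i$ and set $\bu := (\sigma_{ij})_{j}$, the $i$-th row of $\bsigma$. Since $\bsigma \in \bbW^{2,1}$, the matrix $\nabla_M\bu = (\nabla_M\sigma_{ij})_{jk}$ belongs to $\bbW^{1,1}$. Also set $\bw := (\tau_{ij})_j$, the $i$-th row of $\btau$.

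The key observation is that the extrinsic gradient of a scalar is tangential: $\nabla_M v = \bP\nabla v^e$, so $\nabla_M v\cdot\bnu = 0$. Consequently $(\nabla_M\bu)\bnu = 0$ row by row, so the matrix $\nabla_M\bu$ annihilates $\bnu$.

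Now apply \eqref{covariantIBP-matrices} with the roles reversed. Take the vector field to be $\bw$, which is $C^1 \subset \WW^{1,1}$, and the tensor field to be $\nabla_M\bu$. The corollary is stated for $\btau\in\bbC^1$, but we need it for a tensor in $\bbW^{1,1}$. We extend it by density: approximate $\nabla_M\bu$ in $\bbW^{1,1}$ by smooth tensors, which is possible on a $C^{1,1}$ surface, for instance via extension into a tubular neighborhood and mollification. Every term is continuous in the $\bbW^{1,1}$ norm of the tensor because $\bw$, $\nabla_M\bw$, $\tr\bB$ and $\bnu$ are bounded. This gives
\[
(\bw, \div_M\nabla_M\bu)_\G = -(\nabla_M\bw,\nabla_M\bu)_\G + (\tr(\bB)\bw, (\nabla_M\bu)\bnu)_\G = -(\nabla_M\bw,\nabla_M\bu)_\G ,
\]
where the curvature boundary term vanishes by the tangentiality just noted.

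Summing over $i$ yields $(\div_M\nabla_M\bsigma,\btau)_\G = -(\nabla_M\bsigma,\nabla_M\btau)_\G$, which is \eqref{externalIBP-matrices}.

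The only delicate point is the density step, since the regularity of the tensor field must be relaxed from $C^1$ to $W^{1,1}$. Alternatively, it can be avoided entirely. Approximate $\bsigma$ by smooth $\bsigma_n\to\bsigma$ in $\bbW^{2,1}$, apply the argument to $\bsigma_n$, and pass to the limit; both sides are continuous in $\bbW^{2,1}$ because $\btau\in\bbC^1$. If one prefers to stay within stated results, the scalar formula \eqref{eq:int-by-parts} applied entrywise with $u = \tau_{ij}$ and $\bvarphi = \nabla_M\sigma_{ij}$ gives the same computation, with boundary term $\tr(\bB)\,\tau_{ij}\,\nabla_M\sigma_{ij}\cdot\bnu = 0$, again after the same density step.
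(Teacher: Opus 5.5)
Your main argument is correct and is the one the paper has in mind. The paper states this corollary without proof, as a consequence of \eqref{eq:int-by-parts}: apply the lower-order formula componentwise with the divergence falling on $\nabla_M\sigma_{ij}$, and drop the curvature term because $\nabla_M\sigma_{ij}\cdot\bnu=0$. You also correctly make explicit a step the paper leaves implicit: that formula must first be extended by density of $C^1$ in $W^{1,1}$ to a $W^{1,1}$ field carrying the divergence. This is most cleanly done via $C^{1,1}$ charts and a partition of unity.

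Your side claim that approximating $\bsigma$ by smooth $\bsigma_n$ avoids this extension is, however, false on a $C^{1,1}$ surface. Even for restrictions $\bsigma_n=\bF_n|_\G$ of smooth ambient fields, $\nabla_M(\sigma_n)_{ij}=\bP\nabla(F_n)_{ij}$ is in general only as regular as $\bnu$, i.e.\ Lipschitz, and hence not in $\bC^1$. So \eqref{covariantIBP-matrices} or \eqref{eq:int-by-parts} still cannot be applied to it without the same density step. That shortcut would only be valid if $\G$ were of class $C^2$.
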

By testing system \eqref{eq:BE-system} with appropriate test functions and formally integrating by parts in space and time we arrive at the following definition for \emph{weak solutions} of the surface Beris--Edwards system \eqref{eq:BE-system}.
We leave the details of the calculations for \autoref{sec:formal-wf} below.
\begin{definition}[weak solution]\label{def:weak-solution}
Let $\G$ be of class $C^{2,1}$.
Then, for each $T>0$, $\bu_0 \in \bL^2_{t,\sigma}$, $\bQ_0 \in \bbH^1_S$, we say that the pair $(\bu,\bQ)$, with
\begin{align*}
\bu & \in \BB\CC_w([0,T];\bL^2_{t,\sigma}) \cap \LL^2([0,T];\bH^1_{t,\sigma}),\\
\bQ & \in \BB\CC_w([0,T];\bbH^1_S) \cap \LL^2([0,T];\bbH^2_S),
\end{align*}
is a weak solution of \eqref{eq:BE-system} in $[0,T] \times \G$ with initial conditions \eqref{BE-in_con} if:
\begin{enumerate}
\item\label{it:weak-BE-eqs}
For any $\bw \in \CC^1([0,T];\bH^2_{t,\sigma})$ and $\bPsi \in \CC^1([0,T];\bbH^1_S)$ with $\bw(T) = \mathbf{0}$ and $\bPsi(T) = \mathbf{0}$, the momentum \eqref{momentum-eq} and liquid-crystal kinematic \eqref{LC-kinematics} equations are, respectively, weakly satisfied in the following sense
\begin{multline}\label{eq:integrated-momentum}
- \rho \int_0^T (\bu,\partial_t\bw)_\G + \rho \int_0^T ( (\nablaG \bu) \bu, \bw )_\G + 2 \mu \int_0^T (\bD_\G \bu, \bD_\G \bw)_\G\\
+ \int_0^T (\bSigma_{\Gamma}[\bQ], \nablaG \bw)_\G + \int_0^T (\bH[\bQ] : \nabla_M \bQ,\bw)_\G + 2 \int_0^T (\bB \bSigma[\bQ] \bnu, \bw)_\G
= \rho(\bu_0,\bw(0))_\G,
\end{multline}
and
\begin{equation}\label{eq:integrated-LCkinematics}
- \int_0^T (\bQ, \partial_t \bPsi)_\G + \int_0^T ((\nabla_M \bQ) \bu, \bPsi)_\G - \int_0^T (\bS[\bQ,\bu],\bPsi)_\G
= M\int_0^T (\bH[\bQ],\bPsi)_\G + (\bQ_0,\bPsi(0))_\G.
\end{equation}
\item\label{it:energy-ineq}
For a.e.~$t \in (0,T)$ the following energy dissipation law corresponding to \eqref{eq:sbe-energy-law} holds:
\begin{equation}\label{eq:continuous-apriori}
K[\bu(t)] + \ELdG[\bQ(t)] + \int_0^t \int_\G 2 \mu |\bD_\G \bu|^2 + M |\bH|^2 \dd \bx \dd t' \leq K[\bu_0] + \ELdG[\bQ_0].
\end{equation}
\end{enumerate}
\end{definition}

The following two statements are the main results of our paper.

\begin{thm}[existence of weak solutions]\label{thm:existence-weak-solution}
Let $d \in \{2,3\}$ and assume $\G$ is of class $C^{2,1}$.
Then, for each $T > 0$, $\bu_0 \in \bbL^2_{t,\sigma}$ and $\bQ_0 \in \bbH^1_S$, there is a weak solution $(\bu,\bQ)$ of the system \eqref{eq:BE-system} in $[0,T] \times \G$ with initial conditions \eqref{BE-in_con}. 
\end{thm}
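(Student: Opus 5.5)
We will follow the Faedo--Galerkin strategy of \cite{ADL2014,GG-RB2015}, adapted to $\G$. First, we introduce two eigenbases.
\begin{itemize}
\item For the velocity we take the eigenfunctions $\{\bw_k\}$ of the tangent Stokes operator $\bA_S := -\sP\,\bP\divG\bD_\G$ on $\bH^1_{t,\sigma}$, where $\sP$ is the Leray projection. Killing fields lie in its kernel, so we shift the operator by the identity. Its inverse is compact and self-adjoint on $\bL^2_{t,\sigma}$, which gives an $\bL^2$-orthonormal basis. Because $\G\in C^{2,1}$, surface Stokes regularity places it in $\bH^2_{t,\sigma}$.
\item For the Q-tensor we take the eigenfunctions $\{\bPsi_k\}$ of $-\cP\divM\nablaM+\bI$ on $\bbH^1_S$. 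Using \eqref{externalIBP-matrices} and elliptic regularity, these lie in $\bbH^2_S$ (even $\bbH^3_S$ on $C^{2,1}$ surfaces). They are $\bbL^2$-orthonormal, and the $\bbL^2$-projection $\Pi_n$ onto their span is also $\bbH^1$- and $\bbH^2$-stable, since it commutes with the operator.
\end{itemize}
We then seek $\bun=\sum_{k\le n} u_k(t)\bw_k$ and $\bQn=\sum_{k\le n} q_k(t)\bPsi_k$. These solve \eqref{eq:integrated-momentum}--\eqref{eq:integrated-LCkinematics} tested against $\bw_k$ and $\bPsi_k$, with projected initial data. This is an ODE system with locally Lipschitz (polynomial) right-hand side, so Carathéodory/Picard gives a local solution.

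The key step is the discrete energy law, which yields global existence and uniform bounds. We test the momentum equation with $\bun$ and the Q-equation with $-\bH[\bQn]$. The latter is admissible only if $\bH_n:=\Pi_n\bH[\bQn]$ lies in the discrete space, so we test with $-\Pi_n\bH[\bQn]$ and handle the terms as follows.
\begin{itemize}
\item $(\partial_t\bQn,-\Pi_n\bH)=\frac{\dd}{\dd t}\ELdG[\bQn]$, because $\partial_t\bQn$ is discrete.
\item The convective term $((\nablaG\bun)\bun,\bun)_\G$ vanishes by \eqref{eq:int-by-parts} and $\divG\bun=0$.
\item The cancellations between $(\bSigma_\G,\nablaG\bun)+(\bH:\nablaM\bQ,\bun)+2(\bB\bSigma\bnu,\bun)$ and $(\nablaM\bQ\,\bun-\bS[\bQ,\bun],\bH)$ are the discrete analogue of the formal derivation of \eqref{eq:sbe-energy-law}. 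They require pointwise algebraic identities: $\bS_\G:\bH$ against $\bSigma_\G:\nablaG\bu$ (skew part), and $\bS_*:\bH=-2\bB\bu\cdot\bSigma\bnu$.
\end{itemize}
These cancellations need the same tensor $\bH$ to appear in both equations. In the Galerkin setting, the momentum equation therefore has to be posed with $\Pi_n\bH[\bQn]$ in place of $\bH[\bQn]$, mirroring \cite{ADL2014}. With that choice we obtain
\[
K[\bun(t)]+\ELdG[\bQn(t)]+\int_0^t\bigl(2\mu\|\bD_\G\bun\|^2+M\|\Pi_n\bH\|^2\bigr)\le K[\bun(0)]+\ELdG[\bQn(0)].
\]

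From this law we derive uniform bounds.
\begin{itemize}
\item Since $c>0$, $F$ is bounded below up to a constant multiple of $|\bQ|^2$, and Grönwall's inequality controls $\|\bQn\|$. Hence $\bQn$ is bounded in $\LL^\infty\bbH^1$.
\item $\bun$ is bounded in $\LL^\infty\bL^2\cap\LL^2\bH^1$. For this we use a surface Korn inequality, valid modulo Killing fields and controlled by the $\bL^2$ norm, so it suffices here.
\item $\Pi_n\bH$ is bounded in $\LL^2\bbL^2$. With $\bH^1\subset\LL^6$ ($d\le3$), $\Pi_n\cP F'[\bQn]$ is bounded in $\LL^\infty\bbL^2$. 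Combining this with the $\bbH^2$-regularity of the discrete Laplacian gives $\bQn$ bounded in $\LL^2\bbH^2$.
\item Using these bounds and the $\bH^2$/$\bbH^1$-stability of the projections, we bound $\partial_t\bun$ in $\LL^{4/3}((\bH^2_{t,\sigma})')$ (exponent valid for $d=3$) and $\partial_t\bQn$ in $\LL^{4/3}\bbL^{3/2}$, or a similar space.
\end{itemize}

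Aubin--Lions then gives strong convergence of $\bun$ in $\LL^2\bL^2$ and of $\bQn$ in $\LL^2\bbH^1$, together with weak convergences in the energy spaces. We pass to the limit in each nonlinear term, for fixed $\bw_k$ and $\bPsi_k$ with $C^1$ time coefficients, and conclude by density.
\begin{itemize}
\item $\bSigma_\G$ and $\bH:\nablaM\bQ$: strong $\LL^2\bbH^1$ times weak $\LL^2\bbL^2$, where $\Pi_n\bH\rightharpoonup\bH[\bQ]$ because $\Pi_n$ tends strongly to the identity on fixed test functions.
\item The cubic terms, using the $\LL^6$ bounds.
\end{itemize}
Weak continuity in time follows from the $\partial_t$ bounds, and the energy inequality \eqref{eq:continuous-apriori} follows by weak lower semicontinuity for a.e.\ $t$.

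I expect the main obstacle to be the energy cancellation at the discrete level, especially the curvature terms. We must verify that the star corotation $\bS_*$ and the star force $\bf_*$, together with the normal components discarded by $\bP$ in $\bSigma_\G$ and $\nablaG$, cancel exactly. This requires using $\bH$ symmetric-traceless but not tangential, so the normal blocks of $\bQ$ and $\bH$ have to be tracked carefully. If exact cancellation fails with $\Pi_n\bH$, the fallback is to replace $\bH$ by $\Pi_n\bH$ consistently in the corotation and force terms of the Galerkin system.
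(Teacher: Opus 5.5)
Your proposal is correct and follows essentially the same route as the paper. The shared steps are: eigenfunction bases for a tangent Stokes operator and for the tensor Laplace--Beltrami operator; the momentum equation posed with $\pi_n\bH[\bQn]$ in every stress term, so that testing with $\bun$ and $\pi_n\bH[\bQn]$ gives an exact discrete energy law; $\LL^2\bbH^2$ bounds on $\bQn$ from the bound on $\pi_n\bH[\bQn]$; dual-space bounds on the time derivatives; Aubin--Lions; and passage to the limit using $\pi_n\bH[\bQn]\rightharpoonup\bH[\bQ]$.

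One detail needs fixing. Bound $\partial_t\bQn$ in $(\bbH^1_S)'$, by duality and the $\bbH^1$-stability of $\pi_n$ as the paper does, rather than in $\bbL^{3/2}$, because the $\bbL^2$-projection onto eigenfunctions is not in general uniformly bounded on $\bbL^p$ for $p\neq 2$. Your other deviations are immaterial: the shifted $\bD_\G$-based Stokes operator, $\LL^{4/3}$ instead of $\LL^2$ in time, and the Gr\"onwall step, which is superfluous because $c>0$ already gives $F[\bQ]\ge c_1|\bQ|^4-c_2$.
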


Since $\G$ has no boundary, we are able to recover the pressure field $\uppi$ with higher regularity in time than the flat-domain analogue of the Beris-Edwards system \cite[Remark 9]{GG-RB2015}.

\begin{thm}[recovery of the pressure field]\label{thm:pressure-recovery}
Let $d \in \{2,3\}$ and let $\G$ be of class $C^{2,1}$.
Let $(\bu,\bQ)$ be a weak solution provided by \autoref{thm:existence-weak-solution}.
Then, there exists a unique $\uppi \in \bigcap_{\varepsilon \in (0,1) } \LL^2([0,T];\LL^{(d+\varepsilon)^*}_\#)$ such that
\begin{multline}\label{eq:integrated-momentum-withpressure}
- \rho \int_0^T (\bu,\partial_t\bw)_\G + \rho \int_0^T ( (\nablaG \bu) \bu, \bw )_\G + 2 \mu \int_0^T (\bD_\G \bu, \bD_\G \bw)_\G - \rho\int_0^T (\uppi,\divG \bw)_\G\\
+ \int_0^T (\bSigma_{\Gamma}[\bQ], \nablaG \bw)_\G + \int_0^T (\bH[\bQ] : \nabla_M \bQ,\bw)_\G + 2 \int_0^T (\bB \bSigma[\bQ] \bnu, \bw)_\G
= \rho(\bu_0,\bw(0))_\G,
\end{multline}
for any $\bw \in \CC^1([0,T];\bW^{2,d+\varepsilon}_{t})$ with $\bw(T) = \mathbf{0}$.
Moreover, there exists $C = C(\G,T,\varepsilon,\rho,\mu) > 0$ such that
\begin{multline}\label{eq:pressure-apriori}
\|\uppi\|_{\LL^2([0,T];\LL^{(d+\varepsilon)^*}_\#)}
\leq C \Big( \|\bu\|_{\LL^2([0,T];\bH^1)}^2 + \|\bu\|_{\LL^2([0,T];\bL^2)} + \|\bH[\bQ]\|_{\LL^2([0,T];\bbL^2)} \|\nabla_M \bQ\|_{\LL^\infty([0,T];\bbL^2)}\\
+ \|\bQ\|_{\LL^\infty([0,T];\bbH^1)} \|\bH[\bQ]\|_{\LL^2([0,T];\bbL^2)} \Big).
\end{multline}
\end{thm}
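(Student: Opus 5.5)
The plan is to recover $\uppi$ via a de Rham-type argument adapted to the closed surface, using the tangent Stokes (or Hodge/Helmholtz) decomposition on $\G$. Since $\G$ has no boundary, the operator $\nablaG:\LL^{p}_\#\to(\bW^{1,p^*}_t)'$ has closed range. Its annihilator is exactly $\bW^{1,p^*}_{t,\sigma}$. This follows from solvability of the scalar Laplace--Beltrami problem $-\DeltaM\phi=q$ with $\phi\in\WW^{2,p^*}$ for $q\in\LL^{p^*}_\#$, which needs only $C^{1,1}$ regularity of $\G$ (or $C^{2,1}$, which we have). Indeed, for $q\in \LL^{p^*}_\#$, choose $\bw=\nablaG\phi\in\bW^{1,p^*}_t$, so that $\divG\bw=-q$ and $\|\bw\|_{1,p^*}\lesssim\|q\|_{0,p^*}$. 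This gives the inf-sup estimate $\|\uppi\|_{\LL^{p}}\lesssim\sup_{\bw}(\uppi,\divG\bw)_\G/\|\bw\|_{1,p^*}$, together with uniqueness of $\uppi$ in the mean-zero class. We take $p=(d+\varepsilon)^*$, so that $p^*=d+\varepsilon$. For test functions with values in $\bW^{2,d+\varepsilon}_t$, the inf-sup is applied at the level $\bW^{1,d+\varepsilon}$ after one integration by parts, or directly with $\phi\in\WW^{3,d+\varepsilon}$ if we need $\bw\in\bW^{2,d+\varepsilon}$. In the latter case $C^{2,1}$ regularity gives $\WW^{3,p}$ elliptic regularity for the Laplace--Beltrami operator.

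First I rewrite the weak momentum equation in time-integrated form to avoid needing $\partial_t\bu$ in a space where pressure is defined. For a.e.\ $t$, define the functional $\mathcal{F}(t)\in(\bW^{1,d+\varepsilon}_t)'$ by
$\langle\mathcal F(t),\bw\rangle := 2\mu(\bD_\G\bu,\bD_\G\bw)_\G+\rho((\nablaG\bu)\bu,\bw)_\G+(\bSigma_\G[\bQ],\nablaG\bw)_\G+(\bH:\nabla_M\bQ,\bw)_\G+2(\bB\bSigma\bnu,\bw)_\G$.

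I then bound each term using $\bW^{1,d+\varepsilon}\hookrightarrow\bL^\infty$ (Morrey). This gives:
\begin{itemize}
\item $|((\nablaG\bu)\bu,\bw)|\le\|\bu\|_1\|\bu\|\,\|\bw\|_{0,\infty}$;
\item $|(\bH:\nabla_M\bQ,\bw)|\le\|\bH\|\,\|\nabla_M\bQ\|\,\|\bw\|_{0,\infty}$;
\item $|(\bSigma_\G,\nablaG\bw)|\le\|\bQ\|_{0,q}\|\bH\|\,\|\nablaG\bw\|_{0,d+\varepsilon}$ with $1/q+1/2+1/(d+\varepsilon)=1$, i.e.\ $q=2(d+\varepsilon)/(d+\varepsilon-2)$, which lies below the Sobolev exponent $2d/(d-2)$ (or any finite $q$ when $d=2$), so $\|\bQ\|_{0,q}\lesssim\|\bQ\|_1$;
\item the star force term is bounded analogously, using $\bB\in\LL^\infty$.
\end{itemize}
Hence $\mathcal F\in \LL^2([0,T];(\bW^{1,d+\varepsilon}_t)')$, using $\bu\in\LL^\infty\LL^2\cap\LL^2\HH^1$, $\bQ\in\LL^\infty\HH^1$ and $\bH\in\LL^2\LL^2$. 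The bound matches the right-hand side of \eqref{eq:pressure-apriori}, with the $\|\bu\|^2_{\LL^2\HH^1}$ term absorbing the convective term via the energy bound on $\|\bu\|_{\LL^\infty\LL^2}$, or with $\|\bu\|_{\LL^\infty \LL^2}$ controlled by data.

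Next, the time derivative. The weak formulation with divergence-free test functions shows $\rho\partial_t\bu+\mathcal F=0$ in the dual of $\bW^{1,d+\varepsilon}_{t,\sigma}$. Thus $\rho\partial_t\bu$ restricted to solenoidal fields lies in $\LL^2$ in time. This is the main obstacle: $\partial_t\bu$ is only known on solenoidal test functions, so naively the pressure would be a time-distribution, as in flat domains. Here the absence of boundary helps. Let $\bP_\sigma$ be the Helmholtz--Leray projection $\bw\mapsto\bw-\nablaG\phi$ with $\DeltaM\phi=\divG\bw$, which is bounded on $\bW^{k,d+\varepsilon}_t$ by elliptic regularity. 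Since $\bu$ is solenoidal, $(\bu,\partial_t\bw)_\G=(\bu,\partial_t\bP_\sigma\bw)_\G$, and $\bw(0)$ may likewise be replaced by $\bP_\sigma\bw(0)$ because $\bu_0$ is solenoidal. Hence the time-derivative and initial-data terms only see $\bP_\sigma\bw$. The remainder of the identity applied to a general $\bw$ equals $\langle\mathcal F,\bw-\bP_\sigma\bw\rangle=\langle\mathcal F,\nablaG\phi\rangle$. This is a functional of $\divG\bw$ alone, namely $g\mapsto\langle\mathcal F(t),\nablaG(\DeltaM)^{-1}g\rangle$ on $\LL^{d+\varepsilon}_\#$. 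By Riesz representation in $\LL^{d+\varepsilon}_\#$ it is given by $-\rho\,\uppi(t)\in\LL^{(d+\varepsilon)^*}_\#$, with $\|\uppi(t)\|\lesssim\|\mathcal F(t)\|$. Measurability in $t$ follows from linearity and continuity of $\mathcal F\mapsto\uppi$.

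Finally, splitting $\bw=\bP_\sigma\bw+\nablaG\phi$ in \eqref{eq:integrated-momentum-withpressure}, the solenoidal part reduces to \eqref{eq:integrated-momentum}. Here density of $\CC^1([0,T];\bH^2_{t,\sigma})$ in the required class follows from $\bP_\sigma$ preserving time regularity. The gradient part reduces to the defining identity of $\uppi$. Uniqueness follows from the inf-sup estimate, and \eqref{eq:pressure-apriori} follows by integrating the pointwise-in-time bound in time.
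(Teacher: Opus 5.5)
Your construction of $\uppi$ is correct and is essentially the paper's argument: you split $\bw=P_\sigma\bw+\nablaG\phi$, let the weak momentum equation absorb the solenoidal part, and use $\divG\bu=\divG\bu_0=0$ to kill the time-derivative and initial-data terms on gradients. You then obtain $\uppi(t)$ by duality through the $\WW^{2,d+\varepsilon}$-regular Laplace--Beltrami problem, where your pointwise Riesz representation replaces the paper's ultra-weak formulation plus Banach--Ne\v{c}as--Babu\v{s}ka.

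The gap is in the estimate \eqref{eq:pressure-apriori}: your claim that your bound ``matches'' it fails for the viscous term. Bounding $2\mu(\bD_\G\bu,\bD_\G\bw)_\G$ on all of $\bW^{1,d+\varepsilon}_t$ gives $\|\bu\|_{\LL^2([0,T];\bH^1)}$ to the first power. The statement has only $\|\bu\|_{\LL^2([0,T];\bL^2)}$ linearly, and nothing in your argument controls the former by the right-hand side with a constant depending only on $\G,T,\varepsilon,\rho,\mu$. The missing idea is that $\uppi$ only sees $\langle\mathcal F(t),\nablaG\phi\rangle$, and this pairing should be simplified before estimating. First, $(\bSigma_\G[\bQ],\nablaG\nablaG\phi)_\G=0$, because the covariant Hessian is symmetric and $\bSigma_\G$ is skew. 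Second, $(\bD_\G\bu,\bD_\G\nablaG\phi)_\G=(\nablaG\bu,\nablaG\nablaG\phi)_\G=-((\tr(\bB)\bB-\bB^2)\bu,\nablaG\phi)_\G$. This follows by integrating by parts, commuting covariant derivatives (Ricci identity; by the Gauss equation the Ricci tensor of $\G$ is $\tr(\bB)\bB-\bB^2$), and using $\divG\bu=0$. It turns the viscous contribution into a zeroth-order curvature term, which is exactly what produces $\|\bu\|_{\LL^2([0,T];\bL^2)}$ in the paper.

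For the convective term you should not try to reconcile with the statement. ``Absorbing'' $\|\bu\|_{\LL^\infty([0,T];\bL^2)}\|\bu\|_{\LL^2([0,T];\bH^1)}$ via the energy bound only yields a data-dependent multiple of $\|\bu\|_{\LL^2([0,T];\bH^1)}$, which the stated constant does not allow. However, your product bound is actually the correct $\LL^2$-in-time one. The quadratic term $\|\bu\|^2_{\LL^2([0,T];\bH^1)}$ in \eqref{eq:pressure-apriori} comes from the step in \eqref{estimate-F} that replaces $\|\bu\otimes\bu\|_{\LL^2([0,T];\bbL^{d^*})}=\|\bu\|^2_{\LL^4([0,T];\bL^{2d^*})}$ by $\|\bu\|^2_{\LL^2([0,T];\bL^{2d^*})}$, which does not hold in general. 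Ladyzhenskaya's inequalities instead give $\|\bu\|^2_{\LL^4([0,T];\bL^{2d^*})}\lesssim\|\bu\|_{\LL^\infty([0,T];\bL^2)}\|\bu\|_{\LL^2([0,T];\bH^1)}$, which is your term.

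Finally, state explicitly that $\mathcal F(t)$ does not depend on $\varepsilon$, so the Riesz representatives for different $\varepsilon$ coincide. This is what gives a single $\uppi\in\bigcap_{\varepsilon\in(0,1)}\LL^2([0,T];\LL^{(d+\varepsilon)^*}_\#)$.
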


The rest of this paper is structured as follows.
In \autoref{sec:spectral} we establish the existence of orthonormal bases of eigenfunctions of appropriate surface PDE operators.
Their existence is a consequence of the regularity theory for PDEs on manifolds found in \cite{BenavidesNochettoShakipov2025-b} and the classical spectral decomposition of self-adjoint compact operators \cite[\textsection D.6]{Evans2010}, \cite[Chapter 6]{Brezis2011}.
In \autoref{sec:existence}, we utilize the aforementioned bases to design a Faedo--Galerkin scheme for the approximation of the surface Beris-Edwards system \eqref{eq:BE-system}.
We prove that Galerkin solutions satisfy a discrete energy law analogous to \eqref{eq:sbe-energy-law} and a-priori estimates that are uniform in the size of the discrete system.
Via standard compactness arguments, we extract convergent subsequences in different strong and weak topologies that allow us to pass to the limit in the appropriately tested (in-time) Galerkin scheme to recover a weak solution $(\bu,\bQ)$ in the sense of \autoref{def:weak-solution} of \eqref{eq:BE-system} (cf.~\autoref{thm:existence-weak-solution}).
Because the manifold $\G$ has no boundary, we are able to recover the pressure field $\uppi$ from $(\bu,\bQ)$ via a ``postprocessing'' approach based upon \autoref{prop:Helmholtzdecomposition} (Helmholtz--Weyl decomposition on $\G$) and the well-posedness of the ultra-weak formulation of the Laplace--Beltrami operator \cite[Lemma 4.3]{BenavidesNochettoShakipov2025-b}.

\section{Preliminaries on spectral theory}\label{sec:spectral}

In this section we establish the existence of orthonormal bases of eigenfunctions that will be utilized in the Faedo--Galerkin scheme for the Beris--Edwards problem \eqref{eq:BE-system}.

\begin{proposition}[Eigendecomposition of the $-\div_M \nabla_M + I$ operator]\label{lemma:divgrad-eigen}
If $\Gamma$ is of class $C^{1,1}$, then there exist a family $\{\bAn\}_{n \in \bbN} \subseteq \bbH^2_S$ that is an orthonormal basis of $\bbL^2_S$ and an orthogonal basis of $\bbH^1_S$, and a nondecreasing sequence of nonnegative numbers with $\lim_{n \to \infty} \lambda_n = \infty$ such that
\begin{equation*}
- \div_M \nabla_M \bAn + \bAn = \lambda_n \bAn, \quad \text{on $\Gamma$}.
\end{equation*}
\begin{proof}
We first notice that the Riesz-representation theorem in $\bbH^1$ ensures that for each $\bF \in \bbL^2$, there exists a unique $\bA \in \bbH^1$ such that
\begin{equation}\label{eq:aux_divPId}
(\nabla_M \bA, \nabla_M \bR)_\G + (\bA,\bR)_\G = (\bF,\bR)_\G, \qquad \forall\, \bR \in \bbH^1.
\end{equation}
Furthermore, by choosing $\bR = v \be_i \otimes \be_j$ for fixed $i,j \in \{1,\dotsc,d\}$ and $v \in \HH^1$, we obtain
\begin{equation}\label{eq:aux_divPId-componentwise}
(\nabla_M A_{ij}, \nabla_M v) + (A_{ij}, v)_\G = (F_{ij}, v)_\G, \qquad \forall\, v \in \HH^1,
\end{equation}
which is a scalar $\Gamma$-Poisson problem for $A_{ij}$.
The higher regularity theory \cite[Theorem 1.4]{BenavidesNochettoShakipov2025-a} (see also \cite[Lemma 3]{BonitoDemlowNochetto2020} for $\G$ of class $C^2$) implies that $A_{ij} \in \HH^2$ and $\|A_{ij}\|_2 \leq C \|F_{ij}\|$ for each $i,j$;
hence, $\bA \in \bbH^2$ and $\|\bA\|_2 \leq C \|\bF\|$.
Furthermore, by integrating by parts (cf.~\eqref{externalIBP-matrices}), we deduce that $\bA$ strongly satisfies
\begin{equation}\label{eq:strong-aux_divPId}
T (\bA) = \bF,
\end{equation}
where $T: \bbH^2 \rightarrow \bbL^2$ is the continuous and invertible operator given by $T(\bA) := - \div_M \nabla_M \bA + \bA$.
Hence, recalling that $\bbH^2$ is compactly embedded in $\bbL^2$ \cite[Theorem 8.2]{HebeyRobert2008}, the operator $T^{-1}$ from $\bbL^2$ to $\bbL^2$ is compact, symmetric (cf.~\eqref{eq:aux_divPId}) and positive.
Since $\bbL^2$ is separable, it thus follows from \cite[\textsection D.6]{Evans2010} that there exist an orthonormal basis $\{\bAn\}_{n \in \bbN}$ of $\bbL^2$ and a sequence of nonincreasing numbers $\{\alpha_n \}_{n \in \bbN}$ with $\lim_{n \to +\infty} \alpha_n = 0$ such that $T^{-1} (\bAn) = \alpha_n \bAn$, or equivalently, $T (\bAn) = \lambda_n \bAn$ with $\lambda_n = \alpha_n^{-1}$.
By the aforementioned regularity theory \cite{BonitoDemlowNochetto2020} we also know that $\bAn \in \bbH^2$.
From the identity $(\nabla_M \bAn, \nabla_M \bR)_\G + (\bAn,\bR)_\G = \int_\G T(\bAn) : \bR = \lambda_n \int_\G \bAn: \bR$, it is easy to see that $\{\bAn\}_{n \in \bbN}$ is also an orthogonal basis of $\bbH^1$.

Integrating by parts (cf.~\eqref{eq:int-by-parts}) in \eqref{eq:aux_divPId-componentwise} with $F_{ij} = \lambda_n A_{ij}^{(n)}$, we deduce that each component $A_{ij}^{(n)}$ of $\bAn$ strongly satisfies
\begin{equation}\label{eq:aux-eig-componentwise}
- \div_M \nabla_M A_{ij}^{(n)} + A_{ij}^{(n)} = \lambda_n A_{ij}^{(n)}, \qquad \text{a.e.~on $\Gamma$}.
\end{equation}
Let $\wtbAn$ be given by $\wt A_{ij}^{(n)} = \frac{1}{2} (A_{ij}^{(n)} + A_{ji}^{(n)}) - \frac{1}{d+1} \delta_{ij} \tr(\bAn)$.
Clearly, $\wtbAn \in \bbH^2_S$ and by linearity of \eqref{eq:aux-eig-componentwise} each of its components satisfies
\begin{equation*}
- \div_M \nabla_M \wt A_{ij}^{(n)} + \wt A_{ij}^{(n)} = \lambda_n \wt A_{ij}^{(n)}, \qquad \text{a.e.~on $\Gamma$},
\end{equation*}
which altogether is equivalent to
\begin{equation*}
- \div_M \nabla_M \wtbAn + \wtbAn = \lambda_n \wtbAn, \quad \text{on $\Gamma$}.
\end{equation*}
The $\bbL^2$- and $\bbH^1$-orthogonalities of $\{\wtbAn\}$ follow from the fact that $\lambda_n \neq \lambda_m$ if $m \neq n$ and
\begin{align*}
\lambda_n (\wtbAn, \wtbAm)_\G
= (T(\wtbAn), \wtbAm)_\G
& = (\nabla_M \bAn, \nabla_M \bAm)_\G + (\bAn,\bAm)_\G\\
& = (\wtbAn,T(\wtbAm))_\G
= \lambda_m (\wtbAn : \wtbAm)_\G.
\end{align*}
Finally, to see that $\{\wtbAn\}_{n \in \bbN}$ is both a basis of $\bbL^2_S$ and $\bbH^1_S$ it suffices to notice that if $\bA \in \bbL^2_S$ or $\bA \in \bbH^1_S$ is expressed in terms of the basis $\{\bAn\}_{n \in \bbN}$ as $\bA = \sum_{n=1}^\infty \beta_n \bAn$, then
\begin{equation*}
\bA = \frac{1}{2} (\bA + \bA^T) - \frac{\tr(\bA)}{d+1} \bI = \sum_{n=1}^\infty \beta_n \left( \frac{1}{2} (\bAn + {\bAn}^T) - \frac{\tr(\bAn)}{d+1} \bI \right) = \sum_{n=1}^\infty \beta_n \wtbAn.
\end{equation*}
This finishes the proof.
\end{proof}
\end{proposition}
We next state a Helmholtz--Weyl decomposition result for closed hypersurfaces that will be instrumental in proving the existence of the pressure field $\pi$ satisfying an appropriate weak formulation of the momentum equation \eqref{momentum-eq}.
\begin{proposition}[{\cite[Proposition 4.3]{BenavidesNochettoShakipov2025-b}} Helmholtz--Weyl decomposition on $\G$]\label{prop:Helmholtzdecomposition}
Let $p \in (1,\infty)$.
Let $m$ be a nonnegative integer and assume $\G$ is of class $C^1$ if $m=0$ or $C^{m,1}$ if $m \geq 1$.
Then $\bW^{m,p}_t$ can be decomposed as
\begin{equation}\label{H^mp1-decomposition}
\bW^{m,p}_t = \bW^{m,p}_{t,\sigma} \oplus \nablaG \WW^{m+1,p}_\#.
\end{equation}
Furthermore, this decomposition is stable in the sense that there exists a positive constant $C > 0$, depending only on $m$, $p$ and $\G$, such that for each $\bv = \bv_1 + \bv_2 \in \bW^{m,p}_t$ with $\bv_1 \in \bW^{m,p}_{t,\sigma}$ and $\bv_2 \in \nablaG \WW^{m+1,p}_\#$ it holds that
\begin{equation*}
\|\bv_1\|_{m,p} + \|\bv_2\|_{m,p} \leq C \|\bv\|_{m,p}.
\end{equation*}
For $p=2$, this decomposition is $\LL^2$-orthogonal.
\end{proposition}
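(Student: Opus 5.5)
The plan is to prove the decomposition \eqref{H^mp1-decomposition} by solving a scalar Laplace--Beltrami problem for the gradient part.

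\textbf{Step 1 (construction).} Given $\bv \in \bW^{m,p}_t$, we seek $q \in \WW^{m+1,p}_\#$ such that
\begin{equation*}
\Delta_M q = \divG \bv
\end{equation*}
in the weak sense. That is, we require $(\nablaG q, \nablaG \varphi)_\G = (\bv, \nablaG \varphi)_\G$ for all $\varphi \in \WW^{1,p^*}$. The right-hand side is a functional annihilating constants, so it is compatible with the zero-mean constraint. We then set $\bv_2 := \nablaG q$ and $\bv_1 := \bv - \bv_2$.

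\textbf{Step 2 (regularity of $q$).} For $p=2$, Lax--Milgram on $\HH^1_\#$ gives $q$, by Poincaré's inequality on the connected closed surface $\G$. For general $p$ we need the $\WW^{1,p}$ theory of the Laplace--Beltrami operator with divergence-form data. Heuristically, $\|\nablaG q\|_{0,p} \le C\|\bv\|_{0,p}$ follows by localizing with charts of class $C^1$ (or $C^{m,1}$), flattening, and applying Calderón--Zygmund estimates for divergence-form elliptic operators with continuous coefficients. Global uniqueness then yields the a priori bound via a compactness (Fredholm) argument. For $m\ge1$ we bootstrap: since $\divG\bv \in \WW^{m-1,p}$, the higher regularity theory in the style of \cite[Theorem 1.4]{BenavidesNochettoShakipov2025-a} gives $q\in\WW^{m+1,p}$ with $\|q\|_{m+1,p}\le C\|\bv\|_{m,p}$. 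The $C^{m,1}$ regularity of $\G$ is exactly what makes the metric coefficients $C^{m-1,1}$, which is sufficient.

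\textbf{Step 3 (verification).} By construction, $\bv_2$ is tangential, and
\begin{equation*}
(\bv_1,\nablaG\varphi)_\G = 0 \quad \text{for all } \varphi \in \WW^{1,p^*},
\end{equation*}
so $\bv_1\in \bW^{m,p}_{t,\sigma}$. The stability estimate follows from Step 2 and the triangle inequality.

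For directness of the sum, suppose $\nablaG q \in \bW^{m,p}_{t,\sigma}$ with $q$ of mean zero. Then $(\nablaG q,\nablaG\varphi)_\G = 0$ for all $\varphi$, and uniqueness of the weak $\WW^{1,p}$ Neumann-free problem forces $q = 0$. This uniqueness is obtained by duality with the $p^*$ existence theory, since the estimate of Step 2 holds for both $p$ and $p^*$.

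For $p=2$, orthogonality is immediate by testing $\bv_1$ with $\varphi=q$.

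The main obstacle is the $\LL^p$ gradient estimate for $p\neq2$ with only $C^1$ or $C^{m,1}$ geometry. This is where one must rely on the cited elliptic theory on manifolds rather than on an energy argument.
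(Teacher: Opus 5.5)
The paper does not prove this proposition itself; it imports it from \cite[Proposition 4.3]{BenavidesNochettoShakipov2025-b}. Your construction is the standard argument for such a decomposition and is correct: solve $(\nablaG q,\nablaG\varphi)_\G=(\bv,\nablaG\varphi)_\G$ for all $\varphi\in\WW^{1,p^*}$ with $q\in\WW^{1,p}_\#$, set $\bv_2=\nablaG q$, bootstrap to $\WW^{m+1,p}$ with the Laplace--Beltrami regularity theory of \cite{BenavidesNochettoShakipov2025-a}, get directness from uniqueness, and get orthogonality for $p=2$ by testing with $q$. Its only nontrivial input is $\WW^{1,p}$ well-posedness of the Laplace--Beltrami problem on a $C^1$ surface.

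One point is circular as written. A priori estimates plus compactness and duality do not by themselves give uniqueness for $p<2$ (equivalently, existence for $p>2$). You must invoke the genuine regularity statement that a weak solution in $\WW^{1,r}$, $r<p$, with $\bL^p$ data lies in $\WW^{1,p}$. This holds because the flattened coefficients are continuous, and it is part of the elliptic theory you cite.
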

The Helmholtz--Weyl decomposition gives rise to the projection onto a $\divG$-free subspace, called the Leray projection.
\begin{definition}[{\cite[Definition 4.4]{BenavidesNochettoShakipov2025-b}} Leray projection on $\G$]\label{def:Leray-projection}
Under the hypotheses of \autoref{prop:Helmholtzdecomposition}, we define the Leray projection $P_\sigma: \bL^p_t \rightarrow \bL^p_{t,\sigma}$ by $P_\sigma \bv = \bh$ for each $\bv = \bh + \bg \in \bL^p_t$, where $\bh \in \bL^p_{t,\sigma}$ and $\bg \in \nablaG \WW_\#^{1,p}$ are the unique components of $\bv$ given by \autoref{prop:Helmholtzdecomposition} (Helmholtz--Weyl decomposition on $\G$).
We also denote $P_\sigma^\perp \bv = \bg$.
\end{definition}
Notice that
\begin{equation}\label{Leray-Wmp-stability}
\|P_\sigma \bv\|_{m,p} + \|P_\sigma^\perp \bv\|_{m,p} \leq C\|\bv\|_{m,p}, \qquad \bv \in \bW^{m,p}_t
\end{equation}
and
\begin{equation*}
(\bv, \bw)_\G = (P_\sigma \bv, P_\sigma \bw)_\G + (P_\sigma^\perp \bv, P_\sigma^\perp \bw)_\G, \qquad \forall \bv \in \bL^p_t, \bw \in \bL^{p^*}_t.
\end{equation*}
Central to the Faedo--Galerkin method will be the following Stokes operator and its properties that motivated and were proven in our previous work \cite{BenavidesNochettoShakipov2025-b}.
\begin{definition}[{\cite[p.~23]{BenavidesNochettoShakipov2025-b}} Stokes operator on $\G$]\label{def:Stokes-operator}
Let $p \in (1,\infty)$ and $\G$ be of class $C^{2,1}$.
Let $A: \bW^{2,p}_{t,\sigma} \rightarrow \bL^p_{t,\sigma}$ denote the \emph{Stokes operator on $\G$} defined by $A \bv = - P_\sigma (\bP \divG \nablaG \bv$).
\end{definition}

\begin{proposition}[{\cite[Theorem 4.11]{BenavidesNochettoShakipov2025-b}} eigenfunctions of the Stokes operator]\label{prop:Stokes-eigen}
If $\G$ is of class $C^{2,1}$, then there is a family of functions $\{\bv_n\}_{n \in \NN} \subseteq \bigcap_{2 \leq p<\infty} \bW^{2,p}_{t,\sigma}$ and positive numbers $\{\omega_n\}_{n \in \NN}$ with $\lim_{n \to \infty} \omega_n = \infty$ such that
\begin{enumerate}
\item\label{it:eigen-equation} For each $n \in \NN$, $(\bv_n,\omega_n)$ is an eigenpair of $A$:
\begin{equation*}
A \bv_n = \omega_n \bv_n, \qquad \text{a.e.~on $\G$}.
\end{equation*}
\item\label{it:L^2-basis} $\{\bv_n\}_{n \in \NN}$ is an orthonormal basis of $\bL^2_{t,\sigma}$.
\item\label{it:H^1-basis} $\{\omega_n^{-1/2}\bv_n\}_{n \in \NN}$ is an orthonormal basis of $\bH^1_{t,\sigma}$ with respect to the equivalent inner product $(\bu,\bv) \mapsto (\nablaG \bu, \nablaG \bv)_\G$.
\item\label{it:H^2-basis} $\{\omega_n^{-1} \bv_n\}_{n \in \NN}$ is an orthonormal basis of $\bH^2_{t,\sigma}$ with respect to the equivalent inner product $(\bu,\bv) \mapsto (A\bu, A\bv)_\G$, because the Poincaré inequality is valid in $\bH^1_t$ \cite[Theorem 3.3]{BenavidesNochettoShakipov2025-b}.
\end{enumerate}
\end{proposition}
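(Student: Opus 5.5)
The statement is cited from \cite[Theorem 4.11]{BenavidesNochettoShakipov2025-b}, but it can be proved by the same compact self-adjoint operator argument used in \autoref{lemma:divgrad-eigen}, now on the divergence-free tangent space.

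\textbf{Step 1: the solution operator.} For $\bF \in \bL^2_{t,\sigma}$, let $\bv \in \bH^1_{t,\sigma}$ be the unique solution of
\begin{equation*}
(\nablaG \bv, \nablaG \bw)_\G = (\bF,\bw)_\G \qquad \forall\, \bw \in \bH^1_{t,\sigma}.
\end{equation*}
Existence and uniqueness follow from the Riesz representation theorem. The form $(\nablaG\cdot,\nablaG\cdot)_\G$ is an inner product on $\bH^1_t$ equivalent to the standard one, by the Poincaré inequality in $\bH^1_t$ \cite[Theorem 3.3]{BenavidesNochettoShakipov2025-b}. Set $S\bF := \bv$.

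\textbf{Step 2: compactness and spectral decomposition.} The operator $S:\bL^2_{t,\sigma}\to\bL^2_{t,\sigma}$ is bounded into $\bH^1_{t,\sigma}$. Since $\bH^1 \hookrightarrow \bL^2$ compactly (Rellich--Kondrachov), $S$ is compact. It is symmetric, since $(S\bF,\bG)_\G = (\nablaG S\bF,\nablaG S\bG)_\G$. It is positive and injective: $S\bF=0$ gives $(\bF,\bw)_\G=0$ for all $\bw$ in $\bH^1_{t,\sigma}$, which is dense in $\bL^2_{t,\sigma}$. The spectral theorem \cite[\textsection D.6]{Evans2010} then gives an orthonormal basis $\{\bv_n\}$ of $\bL^2_{t,\sigma}$ with $S\bv_n = \omega_n^{-1}\bv_n$, where $\omega_n>0$ and $\omega_n\to\infty$. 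This proves item (2).

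Orthonormality in the gradient inner product comes from
\begin{equation*}
(\nablaG\bv_n,\nablaG\bv_m)_\G=\omega_n(\bv_n,\bv_m)_\G .
\end{equation*}
Completeness in $\bH^1_{t,\sigma}$ holds because any $\bw$ gradient-orthogonal to every $\bv_n$ is $\bL^2$-orthogonal to every $\bv_n$, hence vanishes. This proves item (3).

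\textbf{Step 3: regularity (the main obstacle).} We must show $\bv_n\in\bW^{2,p}_{t,\sigma}$ for all $p\in[2,\infty)$ and that the weak equation holds strongly. I would cite the tangent Stokes regularity theory of \cite{BenavidesNochettoShakipov2025-b}, which needs $\G\in C^{2,1}$. That theory provides a pressure $q\in \WW^{1,p}_\#$ through \autoref{prop:Helmholtzdecomposition}, with
\begin{equation*}
-\bP\divG\nablaG\bv+\nablaG q=\bF \quad \text{and} \quad \|\bv\|_{2,p}\le C\|\bF\|_{0,p}.
\end{equation*}
The difficulty is that the vector Laplacian involves curvature terms, and that $\divG$-free test functions only determine $\bv$ up to gradients; the Helmholtz--Weyl decomposition resolves the latter.

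We then bootstrap. Start with $\bF=\omega_n\bv_n\in\bL^2$, so $\bv_n\in\bH^2$. Sobolev embedding gives $\bv_n\in \bL^p$ for a larger $p$, which feeds back into the $\bW^{2,p}$ estimate. Finitely many iterations yield every $p<\infty$.

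Applying $P_\sigma$ to the strong equation gives $A\bv_n=\omega_n\bv_n$, since $P_\sigma\nablaG q=0$. This is item (1).

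\textbf{Step 4: the $\bH^2$ basis.} Item (4) follows from
\begin{equation*}
(A\bv_n,A\bv_m)_\G=\omega_n\omega_m\delta_{nm}.
\end{equation*}
Completeness follows from the bijectivity of $A$ from $\bH^2_{t,\sigma}$ onto $\bL^2_{t,\sigma}$: if $(A\bw,A\bv_n)_\G=\omega_n(A\bw,\bv_n)_\G=0$ for every $n$, then $A\bw=0$, hence $\bw=0$. The equivalence of $\|A\cdot\|$ with the $\bH^2$ norm follows from the $p=2$ regularity estimate of Step 3 together with Poincaré.
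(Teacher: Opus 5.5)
Your argument is correct. There is no proof in the paper to compare it with: the statement is imported from \cite[Theorem 4.11]{BenavidesNochettoShakipov2025-b}. What you give is a reconstruction modeled on the paper's proof of \autoref{lemma:divgrad-eigen}, with two genuine changes. First, coercivity of $(\nablaG\cdot,\nablaG\cdot)_\G$ comes from the Poincaré inequality on $\bH^1_t$ instead of the zeroth-order shift $+I$. Second, $\bW^{2,p}$ regularity comes from the tangent Stokes theory with a pressure instead of componentwise scalar elliptic regularity. Building the solution operator directly on the constrained space $\bL^2_{t,\sigma}$ is essentially forced here. On a curved surface $\bP\divG\nablaG$ does not, in general, map $\divG$-free fields to $\divG$-free fields, because of curvature terms, so you cannot solve an unconstrained problem and project afterwards. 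The payoff is that orthonormality and completeness come for free. The solve-then-symmetrize route of \autoref{lemma:divgrad-eigen} needs extra care, since the symmetric traceless parts of an orthonormal eigenbasis can vanish or fail to be orthogonal within a repeated eigenvalue. The citation is shorter, but your version makes explicit which external inputs are used: Poincaré on $\bH^1_t$; density of $\bH^1_{t,\sigma}$ in $\bL^2_{t,\sigma}$; and the $\bW^{2,p}$ Stokes estimate, which is where $\G\in C^{2,1}$ is needed. You assert the density without proof. It follows by applying $P_\sigma$ to $\bH^1_t$-approximants, since $P_\sigma$ is $\bL^2$-bounded and maps $\bH^1_t$ into $\bH^1_{t,\sigma}$ by \autoref{prop:Helmholtzdecomposition} with $m=1$. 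Two small simplifications are available. Since $d\le 3$, you have $\bH^2\hookrightarrow\bL^\infty$, so a single bootstrap step already gives every $p<\infty$. For completeness in item (4) you only need injectivity of $A$ on $\bH^2_{t,\sigma}$, not bijectivity, and injectivity follows from $(A\bw,\bw)_\G=\|\nablaG\bw\|^2$ together with Poincaré.
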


\section{Existence of weak solutions}\label{sec:existence}

This section is devoted to proving \autoref{thm:existence-weak-solution} (existence of weak solutions) and \autoref{thm:pressure-recovery} (recovery of the pressure field).
In \autoref{sec:F--G} we design a Faedo--Galerkin method based upon the orthonormal bases of eigenfunctions introduced in \autoref{sec:spectral} for the approximation of the surface Beris--Edwards model \eqref{eq:BE-system}.
In \autoref{sec:energy_apriori} we establish an energy law and useful a-priori estimates satisfied by the solution of the Faedo--Galerkin scheme that are needed for the proofs of \autoref{thm:existence-weak-solution} and \autoref{thm:pressure-recovery}.
In turn, we prove our main results \autoref{thm:existence-weak-solution} and \autoref{thm:pressure-recovery} respectively in \autoref{sec:proof-existence} and \autoref{sec:proof-recovery_pressure}.

\begin{remark}[regularity on $\G$]\label{rem:reg-assumption}
From now onwards we assume that $\G$ is of class $C^{2,1}$.
\end{remark}

\subsection{Faedo--Galerkin method}\label{sec:F--G}

In this section we design a continuous-in-time Faedo--Galerkin method based upon eigenfunctions of surface analogues of the Stokes and Laplace operators to respectively approximate the velocity field $\bu$ and Q-tensor $\bQ$ of the Beris--Edwards problem \eqref{eq:BE-system}.

Let $\{\bA^{(i)}\}_{i=1}^\infty \subseteq \bbH^2_S$ and $\{\bv^{(i)}\}_{i=1}^\infty \subseteq \bH^2_{t, \sigma}$ be, respectively, the eigenfunctions of $I - \divM \nablaM$ and the Stokes operators from \autoref{lemma:divgrad-eigen} and \autoref{prop:Stokes-eigen}.
We define the finite-dimensional spaces
\begin{align*}
\cQ_n & := \vspan\{\bA^{(1)},\cdots,\bAn\} \subseteq \bbL^2_S\\
\cV_n & := \vspan\{\bv^{(1)},\cdots,\bvn\} \subseteq \bL^2_{t,\sigma}.
\end{align*}
and the associated orthogonal projectors
\begin{equation*}
\pi_n: \bbL^2_S \rightarrow \cQ_n,
\quad \text{and} \quad \sP_n:\bL^2_{t,\sigma} \rightarrow \cV_n.
\end{equation*}
Recall that these orthogonal projectors are bounded linear operators with operator norms bounded by $1$.

We define the following discrete versions of $\bSigma$ and $\bSigma_\G$ (cf.~\eqref{Sigma-def}) that will be present in the Galerkin method to be soon introduced
\begin{equation}\label{Sigmas-discrete}
\bSigma_n[\bQ] := \bQ \,\pi_n\bH[\bQ] - (\pi_n\bH[\bQ]) \,\bQ, \qquad\qquad
\bSigma_{n,\Gamma}[\bQ] := \bP \bSigma_n[\bQ] \bP;
\end{equation}
note that $\bSigma_n[\bQ]$ and $\bSigma_{n,\G}[\bQ]$ are skew-symmetric.
We seek approximations of the form
\begin{equation*}
\bun(t)(x) = \sum_{i=1}^n d_i(t) \bvi(x)
\qquad \text{and} \qquad
\bQn(t)(x) = \sum_{i=1}^n h_i(t) \bAi(x),
\end{equation*}
such that the pair $(\bun,\bQn)$ satisfies the following weak formulations for $t \in (0,T)$:
\begin{subequations}\label{galerkin}
\begin{itemize}[leftmargin=*]
\item the linear momentum equation \eqref{momentum-eq} on $\cV_n$:

\begin{multline}\label{galerkin-momentum}
\rho\Big\{ (\partial_t \bun,\bvk)_\G + ( (\nablaG \bun) \bun, \bvk )_\G \Big\} + 2 \mu \, (\bD_\G \bun, \bD_\G \bvk)_\G\\
= -(\bSigma_{n,\Gamma}[\bQn], \nablaG \bvk)_\G - (\pi_n\bH[\bQn] : \nabla_M \bQn,\bvk)_\G - 2 (\bB \bSigma_n[\bQn] \bnu, \bvk)_\G,
\end{multline}
for all $k \in \{1,\dotsc,n\}$.

\item the liquid crystal kinematic equation \eqref{LC-kinematics} on $\cQ_n$:
\begin{equation}\label{galerkin-LCkinematics}
(\partial_t \bQn, \bAl)_\G + ((\nabla_M \bQn) \bun, \bAl)_\G = M (\bH[\bQn],\bAl)_\G + (\bS[\bQn,\bun],\bAl)_\G,
\end{equation}
for all $l \in \{1,\dotsc,n\}$.
\end{itemize}
We supplement these equations with the boundary conditions
\begin{equation}\label{galerkin-initialconditions}
\bun(0) = \bun_0 := \sP_n \bu_0 \quad \text{and} \quad \bQn(0) = \bQn_0 := \pi_n \bQ_0.
\end{equation}
\end{subequations}

\begin{remark}\label{rem:Galerkin-invariances}
In what follows we will use that for all $t \in [0,T)$,
\begin{enumerate}
\item\label{it:partialt-stays-inspace-Q} $\partial_t \bQn(t) \in \cQ_n$, and hence $\pi_n \partial_t \bQn = \partial_t \bQn$. 
\item\label{it:DivNabla-stays-inspace} $\div_M \nabla_M \bQn(t) = (1 - \lambda_n) \bQn(t) \in \cQ_n$, and hence $\pi_n \div_M \nabla_M \bQn = \div_M \nabla_M \bQn$.
\end{enumerate}
\end{remark}

The system formed by collecting equations \eqref{galerkin} can be seen as a finite-dimensional system of time-dependent ODEs.
Such system, according to the Peano existence theorem \cite[p.~6]{Peano-existence-theorem-ref} has a solution on a maximal time interval $[0,T_n)$, with $T_n > 0$.
We must show that $T_n$ is independent of $n$.

\subsection{Energy law and a-priori estimates}\label{sec:energy_apriori}

In this section, we establish a discrete analogue of the energy law \eqref{eq:sbe-energy-law} and appropriate a-priori estimates that are fundamental for the proof of \autoref{thm:existence-weak-solution} (existence of weak solutions).

\begin{proposition}[discrete energy law]\label{pro:discrete-energy-law}
For each $n \in \bbN$ the Galerkin scheme \eqref{galerkin} satisfies the following energy law
\begin{equation}\label{eq:discretelaw}
\frac{\dd}{\dd t} \left(E_{LdG}[\bQn(t)] + K[\bun(t)] \right) = - 2\mu \|\bD_\G \bun(t) \|^2 - M \|\pi_n \bH[\bQn(t)]\|^2 
\end{equation}
for all $t \in (0,T_n)$.
\end{proposition}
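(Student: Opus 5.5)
Since $\bun(t)\in\cV_n$ and $\bQn(t)\in\cQ_n$, we may test \eqref{galerkin-momentum} with $\bun$ and \eqref{galerkin-LCkinematics} with any element of $\cQ_n$, by linearity. The plan is to test the momentum equation with $\bun$ and the kinematic equation with $-\pi_n\bH[\bQn]\in\cQ_n$, add the two identities, and show that all coupling terms cancel.

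\emph{Time derivative and dissipation.} For the momentum part we test with $\bun$. This gives $\frac{\dd}{\dd t}K[\bun]$ from the time derivative and $2\mu\|\bD_\G\bun\|^2$ from the viscous term. The convective term $((\nablaG\bun)\bun,\bun)_\G=\frac12(\bun,\nablaG|\bun|^2)_\G$ vanishes because $\bun\in\bH^1_{t,\sigma}$ and $|\bun|^2\in\WW^{1,p}$ for suitable $p$, using the definition of $\bH^1_{t,\sigma}$ (or \eqref{eq:int-by-parts} with $\bun\cdot\bnu=0$).

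For the kinematic part we test with $-\pi_n\bH[\bQn]$. By \autoref{rem:Galerkin-invariances}\eqref{it:partialt-stays-inspace-Q} we have $(\partial_t\bQn,\pi_n\bH)_\G=(\partial_t\bQn,\bH)_\G$. Since $\partial_t\bQn$ is symmetric and traceless, $\cP$ may be dropped in this pairing, so it equals $(\partial_t\bQn,L\divM\nablaM\bQn-F'[\bQn])_\G=-\frac{\dd}{\dd t}\ELdG[\bQn]$, using \eqref{externalIBP-matrices}. Likewise $M(\bH,\pi_n\bH)_\G=M\|\pi_n\bH\|^2$. Adding the two equations then gives \eqref{eq:discretelaw}, provided the following two coupling identities hold, with $\bH=\bH[\bQn]$:
\begin{equation*}
((\nablaM\bQn)\bun,\pi_n\bH)_\G=(\pi_n\bH:\nablaM\bQn,\bun)_\G
\end{equation*}
and
\begin{equation*}
(\bS[\bQn,\bun],\pi_n\bH)_\G=-(\bSigma_{n,\G}[\bQn],\nablaG\bun)_\G-2(\bB\bSigma_n[\bQn]\bnu,\bun)_\G .
\end{equation*}

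\emph{First identity.} This is pointwise index bookkeeping, once we fix that $\bH:\nablaM\bQ$ denotes the vector $(H_{ij}\partial_k Q_{ij})_k$.

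\emph{Second identity.} This is the main obstacle. Write $\bH_n:=\pi_n\bH$. Using the cyclic trace identity, for any skew $\bW$,
\begin{equation*}
(\bW\bQ-\bQ\bW):\bH_n=\bW:(\bH_n\bQ-\bQ\bH_n)=-\bW:\bSigma_n .
\end{equation*}
For the covariant part, take $\bW=\bW_\G\bun$. Because $\bW_\G\bun=\bP\bW_\G\bun\bP$, we get $\bW_\G\bun:\bSigma_n=\bW_\G\bun:\bSigma_{n,\G}=\nablaG\bun:\bSigma_{n,\G}$, where the last step uses that $\bSigma_{n,\G}$ is skew-symmetric.

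For the star part, take $\bW=\bW_*\bun$. Then
\begin{equation*}
\bW_*\bun:\bSigma_n=(\bB\bun)\cdot\bSigma_n\bnu-\bnu\cdot\bSigma_n\bB\bun=2\,\bSigma_n\bnu\cdot\bB\bun=2\,\bB\bSigma_n\bnu\cdot\bun,
\end{equation*}
using that $\bSigma_n$ is skew-symmetric and $\bB$ is symmetric. The signs should match \eqref{galerkin-momentum}; I would double-check them carefully, since this is exactly where the sign conventions of \eqref{S-def} and \eqref{Sigma-def} matter.

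Regularity poses no difficulty: all Galerkin functions lie in $\bbH^2$ and $\bW^{2,p}$ and the coefficients are $C^1$ in time, so every integral is finite and the differentiation under the integral is justified.
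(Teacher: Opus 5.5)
Your proposal is correct and follows essentially the same route as the paper. You test \eqref{galerkin-LCkinematics} with $\pi_n\bH[\bQn]$ (your sign flip is immaterial) and \eqref{galerkin-momentum} with $\bun$, rewrite $(\partial_t\bQn,\pi_n\bH[\bQn])_\G$ as $-\frac{\dd}{\dd t}\ELdG[\bQn]$, and cancel the coupling terms using $(\bW\bQn-\bQn\bW):\pi_n\bH[\bQn]=-\bW:\bSigma_n[\bQn]$ for skew $\bW$, the identity $\bW_\G\bun=\bP\bW_\G\bun\bP$, and the skew-symmetry of $\bSigma_n[\bQn]$. The signs you wanted to double-check do work out: your computations reproduce the paper's identities \eqref{proto:DEL-4} and \eqref{proto:DEL-6}, which give your second coupling identity and the full cancellation.
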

\begin{proof}
In contrast to \cite{BouckNochettoYushutin2024}, that derives \eqref{eq:sbe-energy-law} via the Generalized Onsager's principle, we now proceed by testing \eqref{galerkin} with suitable test functions and exploiting algebraic cancellations.
We start by noticing that we can replace $\bAl$ by $\pi_n \bH[\bQn]$ in \eqref{galerkin-LCkinematics}, since the latter is a (time-dependent) linear combination of the $\{\bAl\}_{l=1}^n$.
That way, we obtain in first instance:
\begin{equation}\label{proto:DEL-1}
\begin{aligned}
(\partial_t \bQn, \pi_n \bH[\bQn])_\G + ((\nabla_M \bQn) \bun, & \pi_n \bH[\bQn])_\G\\
& = M (\bH[\bQn],\pi_n \bH[\bQn])_\G + (\bS[\bQn,\bun],\pi_n \bH[\bQn])_\G\\
& = M \|\pi_n \bH[\bQn]\|^2 + (\bS[\bQn,\bun],\pi_n \bH[\bQn])_\G.
\end{aligned}
\end{equation}
Since $\partial_t \bQn(t) \in \cQ_n$ (cf.~\autoref{it:partialt-stays-inspace-Q} of \autoref{rem:Galerkin-invariances}), the first term on the left-hand side of \eqref{proto:DEL-1} can be rewritten as a total derivative in time; more precisely
\begin{multline*}
(\partial_t \bQn, \pi_n \bH[\bQn])_\G
= (\partial_t \bQn, \bH[\bQn])_\G
\stackrel{\eqref{H-def}}{=} \left(\partial_t \bQn, \cP\left( L \div_M \nabla_M \bQn - F'[\bQn] \right) \right)_\G\\
= L (\partial_t \bQn, \div_M \nabla_M \bQn)_\G - (\partial_t \bQn, F'[\bQn])_\G
\stackrel{\eqref{externalIBP-matrices}}{=} - L (\partial_t \nabla_M \bQn, \nabla_M \bQn)_\G - (\partial_t \bQn, F'[\bQn])_\G\\
= - \frac{L}{2} \frac{\dd}{\dd t} \|\nabla_M \bQn\|^2 - \frac{\dd}{\dd t} F[\bQn]
= - \frac{\dd}{\dd t} E_{LdG}[\bQn].
\end{multline*}
Replacing this back in \eqref{proto-weakformulation-1}, yields
\begin{equation}\label{proto:DEL-2}
\frac{\dd}{\dd t} E_{LdG}[\bQn] + M \|\pi_n \bH[\bQn]\|^2
= - (\bS[\bQn,\bun],\pi_n \bH[\bQn])_\G + ((\nabla_M \bQn) \bun, \pi_n \bH[\bQn])_\G.
\end{equation}
Now, since $\bun$ is a (time-dependent) linear combination of the $\bvk$, $k=1,\dotsc,n$, we can replace $\bvk$ by $\bu^{(n)}$ in \eqref{galerkin-momentum}, thereby obtaining
\begin{multline}\label{proto:DEL-3}
\frac{\dd}{\dd t} K[\bun] + 2\mu \|\bD_\G \bun\|^2
\stackrel{\eqref{eq:LandaudeGennes&kinetic}}{=} \frac{\rho}{2} \frac{\dd}{\dd t} \|\bun\|^2 + 2\mu \|\bD_\G \bun\|^2\\
= -(\bSigma_{n,\Gamma}[\bQn], \nablaG \bun)_\G - (\pi_n\bH[\bQn] : \nabla_M \bQn,\bun)_\G - 2 (\bB \bSigma_n[\bQn] \bnu, \bun)_\G,
\end{multline}
where we have used that $((\nablaG \bun) \bun, \bun )_\G = 0$, because $\bun$ is $\Gamma$-divergence-free (see, e.g.~\cite[Proposition 5.1]{BenavidesNochettoShakipov2025-b}).
In order to conclude the proof we shall rewrite each one of the three terms in the right-hand side of \eqref{proto:DEL-3}.
Regarding the first term, we have
\begin{equation}\label{proto:DEL-4}
\begin{aligned}
(\bSigma_{n,\Gamma}[\bQn], \nablaG \bun)_\G
& \stackrel{\eqref{Sigmas-discrete}}{=} (\bSigma_n[\bQn], \nablaG \bun)_\G
\stackrel{\text{\tiny \autoref{tab:diff-ops}}}{=} (\bSigma_n[\bQn], \bW_\G(\bun))_\G\\
& \stackrel{\eqref{Sigmas-discrete}}{=} (\bQn\, \pi_n\bH[\bQn] - \pi_n\bH[\bQn] \,\bQn, \bW_\G(\bun))_\G\\
& = -(\bW_\G(\bun) \bQn - \bQn \bW_\G(\bun), \pi_n\bH[\bQn])_\G\\
& \stackrel{\eqref{S-def}}{=} -(\bS_\G[\bQn,\bun], \pi_n\bH[\bQn])_\G,
\end{aligned}
\end{equation}
where have used that $\nablaG \bun = \bP (\nablaG \bun) \bP$, the fact that $\bSigma_n[\bQn]$ is skew-symmetric and $\bQn$ and $\pi_n\bH[\bQn]$ are symmetric, and the easily verifiable identity for $\bA, \bB$ symmetric and $\bC$ skew-symmetric $(\bA \bB - \bB \bA): \bC = - \bB : (\bC \bA - \bA \bC)$.
On the other hand, upon using the identity 
\begin{equation}\label{eq:tensor-tensor-vector}
(\bR : \nabla_M \bT) \cdot \bv = \bR : (\nabla_M \bT)\bv
\end{equation}
for tensor-valued functions $\bR$ and $\bT$, and vector-valued function $\bv$ \cite[eq.~(2.6)]{BouckNochettoYushutin2024}, the second term takes the form
\begin{equation}\label{proto:DEL-5}
(\pi_n\bH[\bQn] : \nabla_M \bQn,\bun)_\G = ((\nabla_M \bQn) \bun, \pi_n\bH[\bQn])_\G.
\end{equation}
Finally, for the third term, we utilize the symmetry of the Weingarten map $\bB$, $\bQn$ and $\pi_n\bH[\bQn]$ and the skew-symmetry of $\bSigma_n[\bQn]$ and $\bW_*(\bun)$ to write
\begin{multline}\label{proto:DEL-6}
2 (\bB \bSigma_n[\bQn] \bnu, \bun)_\G
= 2 (\bSigma_n[\bQn], \bB \bun \otimes \bnu)_\G
= (\bSigma_n[\bQn], \bB \bun \otimes \bnu - \bnu \otimes  \bB \bun)_\G\\
\stackrel{\eqref{starspintensor}}{=} (\bSigma_n[\bQn], \bW_*(\bun))_\G
\stackrel{\eqref{Sigmas-discrete}}{=} (\bQn \,\pi_n\bH[\bQn] - \pi_n\bH[\bQn] \,\bQn, \bW_*(\bun))_\G\\
\stackrel{\eqref{eq:tensor-tensor-vector}}{=} - (\bW_*(\bun) \bQn - \bQn \bW_*(\bun), \pi_n\bH[\bQn])_\G
\stackrel{\eqref{S-def}}{=} - (\bS_*[\bQn,\bun],\pi_n\bH[\bQn])_\G.
\end{multline}
Upon replacing \eqref{proto:DEL-4}, \eqref{proto:DEL-5} and \eqref{proto:DEL-6} back in \eqref{proto:DEL-3}, the proof is finished by adding the resulting equality to \eqref{proto:DEL-2} and realizing that $\bS[\bQn,\bun] := \bS_\G[\bQn,\bun] + \bS_*[\bQn,\bun]$ cancels with the corresponding term in \eqref{proto:DEL-2}.
\end{proof}

We now integrate the estimate \eqref{eq:discretelaw} in time over $[0,t]$ with $t < T_n$ to obtain the following discrete a priori estimate associated with the Galerkin scheme \eqref{galerkin}:
\begin{equation}\label{discrete-apriori}
E_{LdG}[\bQn(t)] + K[\bun(t)] + \int_0^t \Big(2\mu  \|\bD_\G \bun(s) \|^2 + M \|\pi_n \bH[\bQn(s)]\|^2\Big) \dd s
\leq E_{LdG}[\bQn_0] + K[\bun_0].
\end{equation}
Since $|\bQ|^4$ is the leading order term of $F[\bQ]$ (cf.~\eqref{doublewell}) and $\G$ is bounded, it is easy to see that there are constants $c_1$, $c_2$, $c_3$ and $c_4$ depending only on $a$, $b$, $c$ and $d$ such that for all matrix $\bA$
\begin{equation}\label{eq:F-cubic-bounds}
c_1 |\bA|^4 - c_2 \leq F[\bA] \leq c_3 |\bA|^4 + c_4.
\end{equation}
Since $E_{LdG}$ and $K$ are bounded from below, the a priori estimate \eqref{discrete-apriori} can be split as 
\begin{multline}\label{discrete-apriori-Qtensor}
\sup_{t \in [0,T_n)} \left( \|\nabla_M \bQn(t)\|^2 + \|\bQn(t)\|_{0,4}^4 \right) + \int_0^{T_n} \|\pi_n \bH[\bQn(s)]\|^2 \dd s\\
\leq C_1 \left( \|\nabla_M \bQn_0\|^2 + \|\bQn_0\|_{0,4}^4 + \|\bun_0\|^2\right) + C_2
\end{multline}
and
\begin{equation}\label{discrete-apriori-u}
\sup_{t \in [0,T_n)} \|\bun(t)\|^2 + \int_0^{T_n} \|\bD_\G \bun(s) \|^2 \dd s
\leq C_1 \left( \|\nabla_M \bQn_0\|^2 + \|\bQn_0\|_{0,4}^4 + \|\bun_0\|^2\right) + C_2,
\end{equation}
where $C_1$ and $C_2$ are positive constants depending only on $L$, $\rho$, $a$, $b$, $c$, $d$, $\mu$ and $M$.

\begin{thm}
Every solution $(\bun,\bQn)$ of the Galerkin scheme \eqref{galerkin} exists globally in time, namely $T_n = \infty$.
\end{thm}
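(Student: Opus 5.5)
We argue by contradiction using the standard ODE continuation principle. The Galerkin system \eqref{galerkin} is equivalent to an ODE for the coefficient vector $(d_1,\dots,d_n,h_1,\dots,h_n)\in\RR^{2n}$. Indeed, $\{\bvi\}$ and $\{\bAi\}$ are $\LL^2$-orthonormal, so the mass matrices are the identity, and the right-hand side $G_n$ is a polynomial in these coefficients. Every nonlinearity in \eqref{galerkin-momentum}--\eqref{galerkin-LCkinematics} is a polynomial in $(\bun,\bQn)$, their derivatives, $\bP$, $\bnu$ and $\bB$. Integrating against fixed basis functions then gives polynomial coefficients in $(d,h)$. These integrals are finite because $\bvi\in\bW^{2,p}_t$ for all $p<\infty$, $\bAi\in\bbH^2_S\subset\bbL^\infty$ for $d\le3$, and $\bB\in\LL^\infty$ when $\G\in C^{2,1}$. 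Hence $G_n$ is locally Lipschitz, and the solution is in fact unique on $[0,T_n)$. If $T_n<\infty$, the coefficient vector must blow up, $\limsup_{t\to T_n}|(d(t),h(t))|=\infty$. So it suffices to bound $|(d(t),h(t))|$ uniformly on $[0,T_n)$.

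The bound comes from the a priori estimates \eqref{discrete-apriori-Qtensor} and \eqref{discrete-apriori-u}. By orthonormality, $|d(t)|^2=\|\bun(t)\|^2$ and $|h(t)|^2=\|\bQn(t)\|^2$. The first is bounded by the supremum term in \eqref{discrete-apriori-u}. For the second, Hölder on the compact $\G$ gives $\|\bQn\|^2\le|\G|^{1/2}\|\bQn\|_{0,4}^2$, which is controlled by \eqref{discrete-apriori-Qtensor}. The right-hand sides of both estimates are finite and independent of $t$. Since $\pi_n$ and $\sP_n$ have norm one, $\|\bun_0\|\le\|\bu_0\|$. Since $\bQn_0$ lies in a finite-dimensional space on which all norms are equivalent, $\|\nabla_M\bQn_0\|$ and $\|\bQn_0\|_{0,4}$ are finite. (Even uniformly in $n$, since the $\bAi$ are $\bbH^1$-orthogonal, so $\pi_n$ is also the $\bbH^1$-orthogonal projection and $\|\bQn_0\|_1\le\|\bQ_0\|_1$, and $\bbH^1\subset\bbL^4$.) Uniformity in $n$ is not needed here, however.

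The only delicate step is making sure the energy law \eqref{eq:discretelaw} holds on all of $[0,T_n)$, which is what turns \eqref{discrete-apriori} into a bound valid up to $T_n$. This is already done in \autoref{pro:discrete-energy-law}. The lower bound \eqref{eq:F-cubic-bounds} then handles the possibly negative double-well term. With $|(d,h)|$ bounded on $[0,T_n)$, the continuation criterion contradicts $T_n<\infty$, so $T_n=\infty$.
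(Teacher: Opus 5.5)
Your proposal is correct and follows the same route as the paper. The discrete energy estimates \eqref{discrete-apriori-Qtensor} and \eqref{discrete-apriori-u} bound the Galerkin coefficients uniformly on $[0,T_n)$, and the ODE continuation principle (the paper cites Majda--Bertozzi) then forces $T_n=\infty$. Your extra observations---that the vector field is polynomial in the coefficients, hence locally Lipschitz, and that $\|\bQn\|\le|\G|^{1/4}\|\bQn\|_{0,4}$ turns the $\LL^4$ bound into a coefficient bound---only make explicit what the paper leaves implicit.
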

\begin{proof}
From \eqref{discrete-apriori-Qtensor} and \eqref{discrete-apriori-u}, $t \to \|\bQn(t)\|_{0,4} + \|\bun(t)\|$ is uniformly bounded.
That is, the norm of the solution $(\bun,\bQn)$ (belonging to a finite-dimensional space) is uniformly bounded in time.
Then, a straightforward application of \cite[Theorem 3.1]{majda_vorticity_2002} yields $T_n = \infty$.
\end{proof}

\begin{proposition}[a-priori estimates for $(\bun,\bQn)$]\label{prop:reg-space}
Let $n \in \bbN$ and let $(\bun,\bQn)$ be a solution of system \eqref{galerkin}.
Then, for all $T > 0$,
\begin{equation}\label{eq:regularity-in-space}
\begin{aligned}
\bun \in \LL^2([0,T];\bH^1_{t,\sigma}) \cap \LL^\infty([0,T]; \bL^2_{t,\sigma} )\\
\bQn \in \LL^2([0,T];\bbH^2_S) \cap \LL^\infty([0,T];\bbH^1_S)
\end{aligned}
\end{equation}
and the a priori estimate holds
\begin{equation}\label{discrete-apriori-higher}
\int_0^T \|\bun(t)\|_{1}^2 \dd t + \sup_{t \in [0,T]} \|\bun(t)\|^2
+ \int_0^T \|\bQn(t)\|_{2}^2 \dd t + \sup_{t \in [0,T]} \|\bQn(t)\|_{1}^2 \leq C,
\end{equation}
where $C>0$ depends only on $T$, $L$, $\rho$, $a$, $b$, $c$, $d$, $\mu$, $M$ ,$\|\bu_0\|$ and $\|\bQ_0\|_{1}$.

\end{proposition}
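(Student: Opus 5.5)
Starting from the discrete bounds \eqref{discrete-apriori-Qtensor} and \eqref{discrete-apriori-u}, we first make the right-hand sides independent of $n$. Since $\sP_n$ and $\pi_n$ are $\LL^2$-orthogonal projections, $\|\bun_0\|\le\|\bu_0\|$. Because $\{\bAn\}$ is also $\bbH^1$-orthogonal (\autoref{lemma:divgrad-eigen}), $\pi_n$ coincides with the $\bbH^1$-orthogonal projection onto $\cQ_n$, so $\|\bQn_0\|_1\le\|\bQ_0\|_1$. The Sobolev embedding $\bbH^1\hookrightarrow\bbL^4$ ($d\le3$) then gives $\|\bQn_0\|_{0,4}\le C\|\bQ_0\|_1$. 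Hence both right-hand sides are bounded by a constant depending only on the data and on $\|\bu_0\|$, $\|\bQ_0\|_1$. This yields $\sup_t\|\bun\|^2$, $\sup_t\|\nablaM\bQn\|^2$ and $\sup_t\|\bQn\|_{0,4}^4$. Since $\|\bQn\|\le C\|\bQn\|_{0,4}$ on the compact $\G$, we obtain $\sup_t\|\bQn\|_1^2\le C$. This also places $\bQn$ in $\LL^\infty([0,T];\bbH^1_S)$, and $\bun$ in $\LL^\infty([0,T];\bL^2_{t,\sigma})$.

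For the velocity $\bH^1$ bound, we combine $\int_0^T\|\bD_\G\bun\|^2\le C$ with a Korn-type inequality on $\bH^1_t$, namely $\|\bv\|_1\le C(\|\bD_\G\bv\|+\|\bv\|)$, which we take from \cite{BenavidesNochettoShakipov2025-b}. Integrating and using the $\LL^\infty\LL^2$ bound gives $\int_0^T\|\bun\|_1^2\le C(1+T)$.

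The main step is the $\LL^2\bbH^2$ bound for $\bQn$, where we must exploit the Galerkin structure. By \autoref{rem:Galerkin-invariances}, $\divM\nablaM\bQn\in\cQ_n$, and $\cQ_n\subset\bbL^2_S$. Therefore
\[
\pi_n\bH[\bQn]=L\,\divM\nablaM\bQn-\pi_n\cP F'[\bQn],
\]
and so
\[
L\|\divM\nablaM\bQn\|\le\|\pi_n\bH[\bQn]\|+\|\cP F'[\bQn]\|.
\]
From \eqref{proj-F'}, $|\cP F'[\bQ]|\le C(|\bQ|+|\bQ|^2+|\bQ|^3)$. This gives
\[
\|\cP F'[\bQn]\|\le C\bigl(1+\|\bQn\|_{0,6}^3\bigr)\le C\bigl(1+\|\bQn\|_1^3\bigr),
\]
using $\bbH^1\hookrightarrow\bbL^6$ for $d\le3$. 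The right-hand side is uniformly bounded in $t$ by the first step. Squaring and integrating in time, together with $\int_0^T\|\pi_n\bH[\bQn]\|^2\le C$, yields $\int_0^T\|\divM\nablaM\bQn\|^2\le C(1+T)$.

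To conclude, we invoke the elliptic regularity used in the proof of \autoref{lemma:divgrad-eigen}: $\|\bA\|_2\le C\|-\divM\nablaM\bA+\bA\|$ componentwise. This gives $\|\bQn\|_2\le C(\|\divM\nablaM\bQn\|+\|\bQn\|)$, and integrating in time finishes \eqref{discrete-apriori-higher}.

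We expect the only delicate point to be the identification of $\pi_n\bH[\bQn]$ with $L\divM\nablaM\bQn$ minus the projected nonlinearity. This relies on $\pi_n$ commuting appropriately with $\cP$ and on the invariance in \autoref{rem:Galerkin-invariances}; the cubic term is then controlled only because $d\le3$.
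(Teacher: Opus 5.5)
Your proposal is correct and follows the paper's proof essentially step by step. The steps are: $n$-independent bounds on the projected initial data via the $\bbL^2$- and $\bbH^1$-orthogonality of $\sP_n$ and $\pi_n$; Korn's inequality for $\bun$; the identity $\pi_n\bH[\bQn]=L\,\divM\nablaM\bQn-\pi_n\cP F'[\bQn]$, with the cubic term controlled through $\bbH^1\hookrightarrow\bbL^6$; and elliptic regularity of $I-\divM\nablaM$. One small remark is that this identity needs no commutation of $\pi_n$ with $\cP$, only that $\divM\nablaM\bQn\in\cQ_n\subseteq\bbL^2_S$, so that both $\cP$ and $\pi_n$ act on it as the identity.
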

\begin{proof}

First notice that since $\{\bAn\}_{n \in \bbN}$ is an orthogonal basis of $\bbH^1_S$ (cf.~\autoref{lemma:divgrad-eigen}), the operator $\pi_n\rvert_{\bbH^1_S}$ is the orthogonal projector from $\bbH^1_S$ onto $\cQ_n$.

We shall now bound the right-hand sides of \eqref{discrete-apriori-u} and \eqref{discrete-apriori-Qtensor} independently of $n$.
Indeed, it holds that
\begin{multline}\label{RHSdata-bound-independent-of-n}
\|\nabla_M \bQn_0\|^2 + \|\bQn_0\|_{0,4}^4 + \|\bun_0\|^2
= \|\nabla_M \pi_n \bQ_0\|^2 + \|\pi_n \bQ_0\|_{0,4}^4 + \|\sP_n\bu_0\|^2\\
\leq \|\pi_n \bQ_0\|_{1}^2 + \|\imath\|^4 \|\pi_n \bQ_0\|_{1}^4 + \|\bu_0\|^2
\leq \|\bQ_0\|_{1}^2 + \|\imath\|^4 \|\bQ_0\|_{1}^4 + \|\bu_0\|^2,
\end{multline}
where $\|\imath\| $ is the boundedness constant of the continuous (and compact) Sobolev embedding $\imath: \bbH^1 \rightarrow \bbL^4$.
Now, combining the fact that $\LL^\infty([0,T];\bL^2_t)$ is continuously embedded in $\LL^2([0,T];\bL^2_t)$ (with a constant depending on $T$) and Korn's inequality in $\bH^1_t$ \cite[eq.~(4.8)]{JankuhnOlshanskiiMaximReusken2018}, yields
\begin{multline*}
\sup_{t \in [0,T]} \|\bun(t)\|^2 + \int_0^T \|\bD_\G \bun(t) \|^2 \dd t
\geq \frac{1}{2} \sup_{t \in [0,T]} \|\bun(t)\|^2 +  C_T \left( \int_0^T \|\bun(t)\|^2 + \|\bD_\G \bun(t) \|^2 \dd t \right)\\
\geq \frac{1}{2} \sup_{t \in [0,T]} \|\bun(t)\|^2 + \tilde C_T \int_0^T \|\bun(t)\|_{1}^2 \dd t.
\end{multline*}
This, combined with \eqref{discrete-apriori-u} and \eqref{RHSdata-bound-independent-of-n}, proves the desired bound on $\bun$.
To prove the desired estimate for $\bQn$ we need to improve the bound on $\pi_n \bH[\bQn]$ (cf.~\eqref{discrete-apriori-Qtensor}) to a uniform bound on $\div_M \nabla_M \bQn$.
Recall the definition \eqref{H-def} of $\bH[\bQn]$, which since $ \div_M \nabla_M \bQn \in \cQ_n$  (cf.~\autoref{it:DivNabla-stays-inspace} of \autoref{rem:Galerkin-invariances}), lets us write
\begin{equation}\label{pin-of-H-restrictedtoGalerkin}
\pi_n \bH[\bQn] = L  \div_M \nabla_M \bQn - \pi_n \cP F'[\bQn] = L  \div_M \nabla_M \bQn - \pi_n F'[\bQn].
\end{equation}
Note that combining \eqref{discrete-apriori-Qtensor} and \eqref{RHSdata-bound-independent-of-n} leads to
\begin{equation}\label{auxiliar-L^infty_H1-boundforQn}
\sup_{t \in [0,T]} \|\bQn(t)\|_{1} \leq C.
\end{equation}
In turn, combining of identity \eqref{pin-of-H-restrictedtoGalerkin} and estimates \eqref{discrete-apriori-Qtensor} and \eqref{RHSdata-bound-independent-of-n} implies
\begin{equation*}
\int_0^T \|\div_M \nabla_M \bQn(t)\|^2 \dd t \leq C + \int_0^T \|F'[\bQn](t) \|^2 \dd t,
\end{equation*}
for some $C>0$ depending on $\|\bQ_0\|_1$ and $\|\bu_0\|$.
Moreover, since $F'[\bQn]$ has at most cubic terms in $\bQn$, the Sobolev embedding $\bbH^1 \hookrightarrow \bbL^6$ (because $d=2$ or $d=3$) in conjunction with \eqref{auxiliar-L^infty_H1-boundforQn} further yields
\begin{equation*}
\int_0^T \| \div_M \nabla_M \bQn(t)\|^2 \dd t \leq C_T.
\end{equation*}
We finally notice that by elliptic regularity of the operator $\div_M \nabla_M - I$ (see the proof of \autoref{lemma:divgrad-eigen}) it holds that
\begin{equation*}
\|\bQn(t)\|_{2}^2 \leq C\left( \| \div_M \nabla_M \bQn(t)\|^2 + \|\bQn(t)\|^2 \right), \qquad \text{a.e.~} t \in [0,T].
\end{equation*}
This concludes the proof.
\end{proof}

Before establishing further a-priori estimates for $(\bun,\bQn)$ we require some interpolation inequalities, which due to the lack of boundary of $\G$, are akin to their flat domain counterparts (the so-called Ladyzhenskaya's inequality).

\begin{lemma}[Ladyzhenskaya's inequalities]
If $d=2$, then
\begin{equation}\label{lady-2D}
\|v\|_{0,4} \lesssim \|v\|_{1}^{1/2} \|v\|^{1/2}, \qquad \forall v \in \HH^1(\G),
\end{equation}
whereas if $d=3$, then
\begin{equation}\label{lady-3D}
\|v\|_{0,3} \lesssim \|v\|_{1}^{1/2} \|v\|^{1/2}, \qquad \forall v \in \HH^1(\G),
\end{equation}
and
\begin{equation}\label{lady-3D-L4}
\|v\|_{0,4} \lesssim \|v\|_{1}^{3/4} \|v\|^{1/4}, \qquad \forall v \in \HH^1(\G).
\end{equation}

\begin{proof}
Since $\G$ is compact there is a finite atlas $\{(\cV_i,\cU_i,\bchi_i)\}_{i=1}^N$, where each of the charts $\bchi_i: \cV_i \rightarrow \cU_i \cap \G$ are isomorphisms of the same regularity of $\G$ and compatible with its orientation.
Without loss of generality we can assume that there exist domains $\cW_i \subseteq \RR^{d+1}$ such that $\overline{\cW_i} \subseteq \cU_i$ and $\{\cW_i\}_{i=1}^N$ is still a covering of $\G$.
Furthermore, by considering a partition of unity $\{\theta_i\}_{i=1}^N$ associated with the covering $\{\cW_i\}_{i=1}^N$ of $\G$ it follows that for each $i=1,\dotsc,N$ the function $v_i := \theta_i v \in \HH^1(\G)$ are such that $v_i \circ \bchi_i \in \HH^1_0(\cV_i)$.
Let us prove the case $d=2$;
the case $d=3$ will follow similarly.
By the triangle's inequality, Ladyzhenskaya's inequality in flat domains \cite{Nirenberg1959} and Cauchy-Schwarz inequality we have that
\begin{equation*}
\|v\|_{0,4}
\leq \sum_{i=1}^N \|v_i\|_{0,4}
\cong \sum_{i=1}^N \|v_i \circ \bchi_i \|_{\LL^4(\cV_i)}
\lesssim \sum_{i=1}^N \|v_i \circ \bchi_i \|_{\HH^1(\cV_i)}^{1/2} \|v_i \circ \bchi_i \|_{\LL^2(\cV_i)}^{1/2}
\cong \sum_{i=1}^N \|v_i\|_{1}^{1/2} \|v_i\|^{1/2}.
\end{equation*}
Finally, by noticing that $\|v_i\|_{1} \lesssim \|v\|_{1}$ and $\|v_i\| \lesssim \|v\|$ because $\theta_i \in C^1(\G)$, we obtain that
\begin{equation*}
\|v\|_{0,4} \lesssim \sum_{i=1}^N \|v\|_{1}^{1/2} \|v\|^{1/2} =  N \|v\|_{1}^{1/2} \|v\|^{1/2},
\end{equation*}
which finishes the proof.
\end{proof}
\end{lemma}

\begin{proposition}[a-priori estimates in time for $(\partial_t\bun,\partial_t\bQn)$]\label{prop:reg-time}
Let $n \in \bbN$ and let $(\bun,\bQn)$ be a solution of system \eqref{galerkin}.
Then, for all $T > 0$ we have the following estimate:
\begin{equation*}
\int_0^T \|\partial_t\bun\|_{(\bH^2_{t,\sigma})'}^2 \dd t
+ \int_0^T \|\partial_t \bQn\|_{(\bbH^1_S)'}^2 \dd t
\leq C,
\end{equation*}
where $C > 0$ is a constant depending only on $T$, $L$, $\rho$, $a$, $b$, $c$, $d$, $\mu$, $M$, $\|\bB\|_{0,\infty}$, $\|\bu_0\|$ and $\|\bQ_0\|_{1}$.
\end{proposition}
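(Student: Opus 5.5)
The plan is a duality argument: use the Galerkin equations to identify $\partial_t\bun$ and $\partial_t\bQn$ as projections, then bound every term via \autoref{prop:reg-space} and the Ladyzhenskaya inequalities.

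\textbf{Velocity.} Since $\partial_t\bun(t)\in\cV_n$, for any $\bw\in\bH^2_{t,\sigma}$ we have $(\partial_t\bun,\bw)_\G=(\partial_t\bun,\sP_n\bw)_\G$. By \autoref{prop:Stokes-eigen}, $\sP_n$ is simultaneously the $\bL^2$-, $\bH^1$- and $\bH^2$-orthogonal projector with respect to the equivalent inner products, so $\|\sP_n\bw\|_2\lesssim\|\bw\|_2$ uniformly in $n$. This is the reason to measure $\partial_t\bun$ in $(\bH^2_{t,\sigma})'$. Testing \eqref{galerkin-momentum} with $\sP_n\bw$ and using $\|\bw\|_{0,\infty}+\|\nabla\bw\|_{0,6}\lesssim\|\bw\|_2$ (Sobolev, $d\le3$), each term is estimated as follows:
\begin{itemize}
\item Convection: $|((\nablaG\bun)\bun,\bw)_\G|\le\|\bun\|_1\|\bun\|\,\|\bw\|_{0,\infty}$, which is in $\LL^2_t$ by \eqref{discrete-apriori-higher}.
\item Viscous term: $\|\bun\|_1\|\bw\|_1$.
\item Ericksen stress: $|(\bSigma_{n,\G}[\bQn],\nablaG\bw)_\G|\lesssim\|\bQn\|_{0,3}\|\pi_n\bH[\bQn]\|\,\|\nabla\bw\|_{0,6}\lesssim\|\bQn\|_1\|\pi_n\bH\|\,\|\bw\|_2$, and similarly the star term with $\|\bB\|_{0,\infty}$.
\item Beris--Edwards force: $|(\pi_n\bH:\nabla_M\bQn,\bw)_\G|\le\|\pi_n\bH\|\,\|\nabla_M\bQn\|\,\|\bw\|_{0,\infty}$.
\end{itemize}
Every term is then bounded by an $\LL^\infty_t$ quantity times an $\LL^2_t$ quantity, using \eqref{discrete-apriori-Qtensor}, \eqref{RHSdata-bound-independent-of-n} and \eqref{discrete-apriori-higher}. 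Squaring and integrating gives the velocity bound.

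\textbf{Q-tensor.} Likewise, for $\bPsi\in\bbH^1_S$, $(\partial_t\bQn,\bPsi)_\G=(\partial_t\bQn,\pi_n\bPsi)_\G$, and $\|\pi_n\bPsi\|_1\le\|\bPsi\|_1$ because $\pi_n$ is also the $\bbH^1_S$-orthogonal projector (first line of the proof of \autoref{prop:reg-space}). Test \eqref{galerkin-LCkinematics} with $\pi_n\bPsi$:
\begin{itemize}
\item $M(\bH[\bQn],\pi_n\bPsi)_\G\le C(\|\bQn\|_2+\|\bQn\|_{0,6}^3)\|\bPsi\|$, which is in $\LL^2_t$ by \eqref{discrete-apriori-higher} and $\bbH^1\hookrightarrow\bbL^6$.
\item Transport: $|((\nabla_M\bQn)\bun,\pi_n\bPsi)_\G|\le\|\nabla_M\bQn\|_{0,3}\|\bun\|_{0,2}\|\pi_n\bPsi\|_{0,6}$.
\item Corotation: $|(\bS_\G,\pi_n\bPsi)_\G|\lesssim\|\nabla\bun\|\,\|\bQn\|_{0,3}\|\pi_n\bPsi\|_{0,6}\lesssim\|\bun\|_1\|\bQn\|_1\|\bPsi\|_1$, which is in $\LL^2_t$; the star part is easier, with $\|\bB\|_{0,\infty}\|\bun\|_{0,4}\|\bQn\|_{0,4}$.
\end{itemize}

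\textbf{Main obstacle.} The delicate term is the transport term when $d=3$. Interpolating $\|\nabla_M\bQn\|_{0,3}\lesssim\|\bQn\|_2^{1/2}\|\bQn\|_1^{1/2}$ via \eqref{lady-3D} gives a contribution like $\|\bQn\|_2^{1/2}\|\bun\|$, whose square is $\|\bQn\|_2\|\bun\|^2\in\LL^1_t$; that is actually fine. If the $\LL^2_t$ integrability for the combination of $\bun$ and $\nabla\bQn$ is tight, I would fall back on the alternative pairing $\|\nabla_M\bQn\|_{0,4}\|\bun\|_{0,4}\|\pi_n\bPsi\|$ with \eqref{lady-3D-L4}, namely
\[
\|\nabla_M\bQn\|_{0,4}\|\bun\|_{0,4}\lesssim\|\bQn\|_2^{3/4}\|\bun\|_1^{3/4},
\]
whose square is $(\|\bQn\|_2\|\bun\|_1)^{3/2}$. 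This lies in $\LL^1_t$ only up to checking exponents, and that bookkeeping is where I expect the care to be needed. Combining all bounds and integrating over $[0,T]$ yields the claim, with constants depending on the listed parameters.
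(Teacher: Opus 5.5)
Your proposal is correct and is essentially the paper's proof. Both use the same duality step, $(\partial_t\bun,\bw)_\G=(\partial_t\bun,\sP_n\bw)_\G$, with uniform $\bH^2$-stability of $\sP_n$ from the Stokes eigenbasis and $\bbH^1$-stability of $\pi_n$, followed by term-by-term H\"older/Sobolev bounds of the form $\LL^\infty_t\times\LL^2_t$. Your local variants are all valid: you bound the convective term directly through $\bH^2\hookrightarrow\bL^\infty$, and you bound the transport and $\bH[\bQn]$ terms through the $\LL^2([0,T];\bbH^2)$ bound on $\bQn$ rather than through $\|\bun\|_1$ and $\|\pi_n\bH[\bQn]\|$.

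Your ``main obstacle'' is not an obstacle, since your primary pairing already gives $\|\bQn\|_2\|\bun\|\in\LL^2_t$. You should drop the fallback pairing, because it would not close: $(\|\bQn\|_2\|\bun\|_1)^{3/2}$ is in general only in $\LL^{2/3}_t$.
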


\begin{proof}
We start by proving the estimate for $\partial_t \bun$.
By H\"older's inequality and the Sobolev embeddings $\bH^2 \hookrightarrow \bL^\infty$, $\bbH^1 \hookrightarrow \bbL^6$ and $\bbH^1 \hookrightarrow \bbL^3$, we have that for each $\bv \in \cV_n$ and for each $t \in (0,T]$,
\begin{equation*}
|(\bD_\G \bun, \bD_\G \bv)_\G|
\leq \|\nablaG\bun\| \|\nablaG \bv\|
\leq \|\bun\|_{1} \|\bv\|_{2},
\end{equation*}
\begin{equation*}
|(\bSigma_{n,\Gamma}[\bQn], \nablaG \bv)_\G|
\stackrel{\eqref{Sigmas-discrete}}{\leq} 2 \|\bQn\|_{0,6} \|\pi_n \bH[\bQn]\| \|\nablaG \bv\|_{0,3}
\lesssim \|\bQn\|_{1} \|\pi_n \bH[\bQn]\| \|\bv\|_{2},
\end{equation*}
\begin{equation*}
|(\pi_n\bH[\bQn] : \nabla_M \bQn,\bv)_\G|
\leq \|\pi_n\bH[\bQn]\| \|\nabla_M \bQn\| \|\bv\|_{0,\infty}
\lesssim \|\pi_n \bH[\bQn]\|\bQn\|_{1} \|\bv\|_{2},
\end{equation*}
and
\begin{equation*}
|(\bB \bSigma_n[\bQn] \bnu, \bv)_\G|
\leq \|\bB\|_{0,\infty} \|\bQn \,\pi_n\bH[\bQn] - \pi_n\bH[\bQn] \,\bQn\|_{0,1} \|\bv\|_{0,\infty}
\lesssim \|\bB\|_{0,\infty} \|\bQn\| \|\pi_n \bH[\bQn]\| \|\bv\|_{2}.
\end{equation*}
Depending on the dimension $d$, we control the convective term differently.
For $d = 2$, using H\"older's inequality and interpolation inequality \eqref{lady-2D} we arrive at
\begin{subequations}\label{bounding-convective-term}
\begin{equation}\label{bounding-convective-term-2D}
|( (\nablaG \bun) \bun, \bv )_\G|
= |( (\nablaG \bv) \bun, \bun )_\G|
\leq \|\bun\|_{0,4}^2 \|\nablaG \bv\|_{\bbL^2}
\lesssim \|\bun\| \|\bun\|_{1}  \|\bv\|_{1}.
\end{equation}
If $d = 3$ we proceed similarly by using instead interpolation inequality \eqref{lady-3D} and the embedding $\bbH^1 \hookrightarrow \bbL^3$, thus arriving at
\begin{equation}\label{bounding-convective-term-3D}
|( (\nablaG \bun) \bun, \bv )_\G|
= |( (\nablaG \bv) \bun, \bun )_\G|
\leq \|\bun\|_{0,3}^2 \|\nablaG \bv\|_{0,3}
\lesssim \|\bun\| \|\bun\|_{1}  \|\bv\|_{2}.
\end{equation}
\end{subequations}
Hence, upon recalling the form of the discrete momentum equation \eqref{galerkin-momentum}, we deduce that for each $t \in (0,T]$:
\begin{equation*}
|(\partial_t \bun,\bv)_\G| \leq C \alpha_n \|\bv\|_{2}, \qquad \forall \bv \in \cV_n.
\end{equation*}
where $C$ is a positive constant depending on $\Omega$, $\|\bB\|_{\bbL^\infty(\Omega)}$, $\rho$ and $\mu$, and $\alpha_n$ is a time-dependent function defined by
\begin{equation*}
\alpha_n := (1+\|\bun\|) \|\bun\|_{1} + \|\pi_n \bH[\bQn]\| \|\bQn\|_{1},
\end{equation*}
which is in $\LL^2([0,T])$ with an $\LL^2$-norm bounded independently of $n$ (cf.~\eqref{discrete-apriori-Qtensor}, \eqref{discrete-apriori-higher}).
Moreover, keeping in mind that $(\partial_t \bun, \bv)_\G = 0$ for all $\bv$ in the $\bL^2$-orthogonal complement of $\cV_n$ (because $\bun(t) \in \cV_n$ for each $t$) and that $\|\sP_n \bv\|_{2} \lesssim \|\bv\|_{2}$ for each $\bv \in \bH^2_{t,\sigma}$ (because of \autoref{prop:Stokes-eigen}), we obtain that for each $t \in (0,T]$,
\begin{equation}\label{eq:bounding-partial_t-u}
\|\partial_t \bun\|_{(\bH^2_{t,\sigma})'}
= \sup_{\bv \in \bH^2_{t,\sigma}} \frac{|(\partial_t \bun,\bv)_\G|}{\|\bv\|_{2}}
= \sup_{\bv \in \bH^2_{t,\sigma}} \frac{|(\partial_t \bun,\sP_n \bv)_\G|}{\|\bv\|_{2}}
\lesssim \sup_{\bv \in \bH^2_{t,\sigma}} \frac{|(\partial_t \bun,\sP_n \bv)_\G|}{\|\sP_n \bv\|_{2}}
\lesssim \alpha_n.
\end{equation}
which proves the estimate for $\partial_t \bun$.

Now we prove the estimate for $\partial_t \bQn$.
By H\"older's inequality and Sobolev embeddings $\bbH^1 \hookrightarrow \bbL^6$ and $\bH^1 \hookrightarrow \bL^6$ we have that for each $t \in (0,T]$ and $\bA \in \cQ_n$,
\begin{equation*}
|((\nabla_M \bQn) \bun, \bA)_\G|
\leq \|\nabla_M \bQn\| \|\bun\|_{0,3} \|\bA\|_{0,6}
\lesssim \|\bQn\|_{1} \|\bun\|_{1} \|\bA\|_{1},
\end{equation*}
\begin{equation*}
|(\pi_n\bH[\bQn],\bA)_\G|
\leq \|\pi_n \bH[\bQn]\| \|\bA\|_{1}
\end{equation*}
and
\begin{multline*}
|(\bS[\bQn,\bun],\bA)_\G|
\stackrel{\eqref{S-def}}{\leq} (\|\bW_\G[\bun]\| + \|\bW_*[\bun]\|) \|\bQn\|_{0,3} \|\bA\|_{0,6}
\lesssim (\|\bW_\G[\bun]\| + \|\bW_*[\bun]\|) \|\bQn\|_{1} \|\bA\|_{1}\\
\stackrel{\text{\tiny \autoref{tab:diff-ops}},\eqref{starspintensor}}{\leq} C(\|\nablaG \bun\| + \|\bB\|_{0,\infty} \|\bun\| \|) \|\bQn\|_{1} \|\bA\|_{1}
\leq C(1 + \|\bB\|_{0,\infty}) \|\bQn\|_{1} \|\bun\|_{1} \|\bA\|_{1}.
\end{multline*}
All these estimates combined with the discrete kinematic equation \eqref{galerkin-LCkinematics} yield,
\begin{equation*}
|(\partial_t \bQn,\bA)_\G|
\leq C b_n \|\bA\|_{1}, \qquad \forall t \in (0,T],
\end{equation*}
where $C>0$ only depends on $M$ and $\|\bB\|_{0,\infty}$, and $y_n$ is the time-dependent function defined by
\begin{equation*}
b_n := \|\bQn\|_{1} \|\bun\|_{1} + \|\pi_n \bH[\bQn]\|,
\end{equation*}
which is in $\LL^2([0,T])$ with an $\LL^2$-norm bounded independently of $n$ (cf.~\eqref{discrete-apriori-Qtensor},\eqref{discrete-apriori-higher}).
Since $(\partial_t \bQn, \bA)_\G = 0$ for all $\bA$ in the $\bbL^2$-orthogonal complement of $\cQ_n$ (because $\bun(t) \in \cQ_n$ for each $t$) and that $\|\pi_n \bA\|_{1} \leq C \|\bA\|_{1}$ for each $\bA \in \bbH^1_S$ (because of \autoref{lemma:divgrad-eigen}), we proceed similarly to \eqref{eq:bounding-partial_t-u} to obtain that for each $t \in (0,T]$,
\begin{equation*}
\|\partial_t \bQn\|_{(\bbH^1_S)'} \lesssim b_n \leq b,
\end{equation*}
which proves the estimate for $\partial_t \bQn$.
\end{proof}

\subsection{Proof of \autoref{thm:existence-weak-solution}: existence of weak solutions}\label{sec:proof-existence}
We proceed similarly to \cite[Proof of Theorem 1.2]{ADL2014} (see also \cite{GG-RB2015}) by standard compactness arguments.
In \autoref{sec:1step-subseq} we utilize the classical Aubin--Lions--Simon and Banach--Alaoglu theorems and interpolation inequalities to extract subsequences of the sequence of Galerkin solutions that converge in appropriate weak and strong topologies to a unique limit $(\bu,\bQ)$.
With these convergences at hand, we are able to pass to the limit in the suitably tested-in-time equations \eqref{galerkin} satisfied by the Galerkin solutions in order to respectively prove in \autoref{sec:2step-momentum}, \autoref{sec:3step-LCkinematics} and \autoref{sec:4step-Eineq} that the limit $(\bu,\bQ)$ satisfies the \emph{weak momentum equation} \eqref{eq:integrated-momentum}, \emph{weak LC kinematics equation} \eqref{eq:integrated-LCkinematics} and energy inequality \eqref{eq:continuous-apriori};
hence, $(\bu,\bQ)$ is a weak solution of the surface Beris--Edwards problem.

\subsubsection{First step: convergence up to subsequence}\label{sec:1step-subseq}

Let $T > 0$.
From \autoref{prop:reg-space} and \autoref{prop:reg-time} we know that the sequence of discrete solutions $(\bun,\bQn)$ satisfy the bound
\begin{equation}\label{galerkin-bound-altogether}
\begin{aligned}
\int_0^T \|\bun(t)\|_{1}^2 \dd t & + \sup_{t \in [0,T]} \|\bun(t)\|^2
+ \int_0^T \|\partial_t\bun\|_{(\bH^2_{t,\sigma})'}^2 \dd t\\
& + \int_0^T \|\bQn(t)\|_{2}^2 \dd t + \sup_{t \in [0,T]} \|\bQn(t)\|_{1}^2
+ \int_0^T \|\partial_t \bQn\|_{(\bbH^1_S)'}^2 \dd t
\leq C,
\end{aligned}
\end{equation}
where $C>0$ depends on data and $T$, but is independent of $n$.
By the Banach--Alaoglu Theorem \cite[Theorems 3.16 and 3.18]{Brezis2011} and the reflexivity of the corresponding Bochner spaces for $p \in (1,\infty)$, we have that (up to a subsequence)
\begin{subequations}\label{eq:weak-conv-Q&u}
\begin{align}
\label{eq:weak-conv-u1} \bun & \rightharpoonup \bu,  & \text{in $\LL^2([0,T];\bH^1_{t,\sigma})$},\\
\label{eq:weak-conv-u2} \bD_\G \bun & \rightharpoonup \bD_\G \bu,  & \text{in $\LL^2([0,T];\bbL^2)$},\\
\label{eq:weak-conv-u3} \bun & \xrightharpoonup{*} \bu,  & \text{in $\LL^\infty([0,T];\bL^2_{t,\sigma})$},\\
\label{eq:weak-conv-Q1}\bQn & \rightharpoonup \bQ,  & \text{in $\LL^2([0,T];\bbH^2_S)$},\\
\label{eq:weak-conv-Q2} \nabla_M \bQn & \rightharpoonup \nabla_M \bQ,  & \text{in $\LL^2([0,T];\HH^1(\G, \RR^{(d+1)^3}))$},\\
\label{eq:weak-conv-Q3} \div_M \nabla_M \bQn & \rightharpoonup \div_M \nabla_M \bQ, & \text{in $\LL^2([0,T];\bbL^2_S)$},\\
\label{eq:weak-conv-Q4} \bQn & \xrightharpoonup{*} \bQ,  & \text{in $\LL^\infty([0,T];\bbH^1_S)$},\\
\label{eq:weak-conv-Q5} \nabla_M \bQn & \xrightharpoonup{*} \nabla_M \bQ,  & \text{$\LL^\infty([0,T];\LL^2(\G, \RR^{(d+1)^3}))$};
\end{align}
\end{subequations}
notice that we have used in \eqref{eq:weak-conv-Q3} that $\div_M \nabla_M \bQ \in \bbL^2_S$ because $\div_M \nabla_M \bQn \in \cQ_n \subseteq \bbL^2_S$ for all $n$.
Moreover, taking into account the sequence of Sobolev embeddings $\bbH^2_S \xhookrightarrow{c} \bbH^1_S \hookrightarrow (\bbH^1_S)'$
and $\bbH^1_S \xhookrightarrow{c} \bbL^p_S \hookrightarrow (\bbH^2_S)'$ for $p \in (1,6)$, a direct application of Aubin--Lions--Simon theorem \cite[p.~85, Corollary 4]{Simon1987} to estimates \eqref{galerkin-bound-altogether} yields the following strong convergences (up to a subsequence) for the sequence of Q-tensors $\{\bQn\}_{n \in \bbN}$:
\begin{subequations}\label{eq:strong-conv-u&Q}
\begin{align}
\label{eq:strong-conv-Q1} \bQn & \to \bQ, \qquad \text{in $\LL^2([0,T];\bbH^1_S)$},\\
\label{eq:strong-conv-Q2} \bQn & \to \bQ, \qquad \text{in $\CC([0,T];\bbL^p_S)$}. %
\end{align}
Similarly, given the sequence of Sobolev embeddings $\bH^1_{t,\sigma} \xhookrightarrow{c} \bL^2_{t,\sigma} \hookrightarrow (\bH^2_{t,\sigma})'$ and $\bL^2_{t,\sigma} \xhookrightarrow{c} (\bH^1_{t,\sigma})' \hookrightarrow (\bH^2_{t,\sigma})'$, we also obtain the following strong convergences for $\{\bun\}_{n \in \bbN}$:
\begin{align}
\label{eq:strong-conv-u1} \bun & \to \bu, \qquad \text{in $\LL^2([0,T];\bL^2_{t,\sigma})$},\\
\label{eq:strong-conv-u2} \bun & \to \bu, \qquad \text{in $\CC([0,T];(\bH^1_{t,\sigma})')$}.
\end{align}
\end{subequations}
Since $\bu \in \LL^\infty([0,T];\bL^2_{t,\sigma}) \cap \CC([0,T];(\bH^1_{t,\sigma})')$ and the embedding $\bL^2_{t,\sigma} \hookrightarrow (\bH^1_{t,\sigma})'$ is continuous, we deduce from \cite[p.~178, Lemma 1.4]{Temam2001} that $\bu \in \BB\CC_w([0,T];\bL^2_{t,\sigma})$.
Similarly, since $\bQ \in \LL^\infty([0,T];\bbH^1_S) \cap \CC([0,T];\bbL^2_S)$ we deduce that $\BB\CC_w([0,T];\bbH^1_S)$.
We also claim that
\begin{equation}\label{H-weak-conv}
\pi_n \bH[\bQn] \rightharpoonup \bH[\bQ], \qquad \text{in $\LL^2([0,T];\bbL^2_S)$}.
\end{equation}
In fact, recall from definition \eqref{H-def} and identity \eqref{pin-of-H-restrictedtoGalerkin} that
\begin{align*}
\pi_n \bH[\bQn] & = L  \div_M \nabla_M \bQn - \pi_n \cP F'[\bQn],\\
\bH[\bQ] & = \cP\left( L \div_M \nabla_M \bQ - F'[\bQ] \right) = L \div_M \nabla_M \bQ - \cP F'[\bQ].
\end{align*}
Hence, given the weak convergence \eqref{eq:weak-conv-Q3}, in order to prove \eqref{H-weak-conv} it suffices to show that $\pi_n \cP F'[\bQn] \rightharpoonup \cP F'[\bQ]$ in $\LL^2([0,T];\bbL^2_S)$.
Indeed, since $\{\bQn\}_{n \in \bbN}$ is bounded in $\LL^\infty([0,T];\bbH^1)$ and $\cP F'[\bQn]$ is a cubic polynomial on $\bQn$ in view of \eqref{proj-F'}, the continuous injection $\bbH^1 \hookrightarrow \bbL^6$ implies that $\{\cP F'[\bQn]\}_{n \in \bbN}$ is bounded in $\LL^2([0,T];\bbL^2_S)$ and so in $\bbL^2_S([0,T]\times \Gamma)$ as well.
Moreover, notice that because of \eqref{eq:strong-conv-Q2}, it follows that (up to a subsequence) $\cP F'[\bQn] \to \cP F'[\bQ]$ a.e.~in $[0,T] \times \G$.
Invoking \cite[Section 8.2, Theorem 12]{RoydenFitzpatrick2010}, these last two properties imply that $\cP F'[\bQn]$ converges weakly to $\cP F'[\bQn]$ in $\bbL^2_S([0,T] \times \G)$ and so it does weakly in $\LL^2([0,T];\bbL^2_S)$.
Finally, due to the pointwise convergence of the self-adjoint operator $\pi_n$ to the identity map on $\bbL^2_S$ and using \cite[Proposition 3.5, (iii)]{Brezis2011} it can be proved that $\pi_n \cP F'[\bQn]$ also weakly converges to $\cP F'[\bQ]$ in $\LL^2([0,T];\bbL^2_S)$;
hence we obtain \eqref{H-weak-conv}.

Finally, regarding the nonlinear terms, we claim that $\{\bun \otimes \bun\}_{n \in \bbN}$ is bounded in
\begin{equation*}
\begin{cases}
\LL^{2}([0,T];\bbL^2), & \text{if $d = 2$},\\
\LL^{\frac{4}{3}}([0,T];\bbL^2), & \text{if $d = 3$}.
\end{cases}
\end{equation*}
In fact, for $d=2$, combining interpolation inequality \eqref{lady-2D} and H\"older's inequality in time yield
\begin{equation*}
\left[\int\limits_0^T \|\bun \otimes \bun\|^2\right]^{\frac{1}{2}}
\leq \left[\int\limits_0^T \|\bun\|_{0,4}^4 \right]^{\frac{1}{2}}
\lesssim \left[\int\limits_0^T \|\bun\|_{1}^2 \|\bun\|^2 \right]^{\frac{1}{2}}
\leq \|\bun\|_{\LL^2([0,T];\bH^1)} \|\bun\|_{\LL^\infty([0,T];\bL^2)}
\stackrel{\eqref{galerkin-bound-altogether}}{\leq} C.
\end{equation*}
Analogously, for $d=3$, using interpolation inequality \eqref{lady-3D-L4},
\begin{equation*}
\left[\int\limits_0^T \|\bun \otimes \bun\|^{\frac{4}{3}}\right]^{\frac{3}{4}}
\leq \left[\int\limits_0^T \|\bun\|_{0,4}^{\frac{8}{3}} \right]^{\frac{3}{4}}
\lesssim \left[\int\limits_0^T \|\bun\|_{1}^2 \|\bun\|^{\frac{2}{3}} \right]^{\frac{3}{4}}\\
\leq \|\bun\|_{\LL^2([0,T];\bH^1)}^{\frac{3}{2}} \|\bun\|_{\LL^\infty ([0,T];\bL^2)}^{\frac{1}{2}}
\stackrel{\eqref{galerkin-bound-altogether}}{\leq} C,
\end{equation*}
whence (up to a subsequence)
\begin{equation}\label{eq:strong-conv-u3}
\bun \otimes \bun \rightharpoonup \bu \otimes \bu, \qquad \text{in $\begin{cases}
\LL^{2}([0,T];\bbL^2), & \text{if $d = 2$},\\
\LL^{\frac{4}{3}}([0,T];\bbL^2), & \text{if $d = 3$};
\end{cases}$}
\end{equation}
the weak limit coinciding with $\bu \otimes \bu$ because $\bun \otimes \bun \to \bu \otimes \bu$ a.e.~in $[0,T] \times \G$ (cf.~\eqref{eq:strong-conv-u1}) and by \cite[Section 8.2, Theorem 12]{RoydenFitzpatrick2010}.

\subsubsection{Step 2: momentum equation satisfied by \texorpdfstring{$(\bu,\bQ)$}{(u,Q)}}\label{sec:2step-momentum}
Let $\bw \in C^1([0,T];\bH^2_{t,\sigma})$ be such that $\bw(T) = 0$.
For any $t \in [0,T]$, we write
\begin{equation*}
\bw(t) = \sum_{i=1}^\infty d_i(t) \bvi,
\end{equation*}
with $d_i(t) = (\bw(t),\bvi)_\G$ and the series converging in the $C([0,T];\bH^m)$-norms, $m \in \{0,1,2\}$ (cf.~\autoref{prop:Stokes-eigen}).
Notice that $d_i \in C^1([0,T])$ and $d_i(T) = 0$ for each $i \in \bbN$.
Moreover, it holds that $\partial_t \bw(t) = \sum_{i=1}^\infty d_i'(t) \bvi \in C([0,T];\bH^2_{t,\sigma})$ with the series also converging in the previously mentioned norms (this follows from combining the Banach--Steinhaus theorem and the fact that $\{\partial_t \bw(t): t \in [0,T]\}$ is a compact subset of $\bH^2_{t,\sigma}$).

For a fixed $N \leq n$ define $\bwN := \sum_{i=1}^N d_i \bvi$.
Taking $\bwN(t) \in \cV_n$ for each $t \in [0,T]$ in \eqref{galerkin-momentum}, integrating in time (including an integration-by-parts in the first term), and using the initial condition $\bun(0) = \bun_0$, we arrive at
\begin{multline}\label{integrated-galerkin-momentum}
- \rho \int\limits_0^T (\bun,\partial_t\bwN)_\G + \rho \int\limits_0^T ( (\nablaG \bun) \bun, \bwN )_\G + 2 \mu \int\limits_0^T (\bD_\G \bun, \bD_\G \bwN)_\G
+ \int\limits_0^T (\bSigma_{n,\Gamma}[\bQn], \nablaG \bwN)_\G\\ + \int\limits_0^T (\pi_n\bH[\bQn] : \nabla_M \bQn,\bwN)_\G + 2 \int\limits_0^T (\bB \bSigma_n[\bQn] \bnu, \bwN)_\G
= \rho(\bun_0,\bwN(0))_\G.
\end{multline}
By utilizing convergences \eqref{eq:strong-conv-u1} and \eqref{eq:weak-conv-u2}, we can first pass to the limit as $n \to \infty$ and then as $N \to \infty$ in the first and third terms of \eqref{integrated-galerkin-momentum}, thus yielding
\begin{subequations}\label{momentum-to-limit-ALL}
\begin{equation}\label{momentum-to-limit-1}
\begin{aligned}
\int_0^T (\bun,\partial_t\bwN)_\G & \xrightarrow{n\to\infty} \int_0^T (\bu,\partial_t\bwN)_\G \xrightarrow{N \to \infty} \int_0^T (\bu,\partial_t\bw)_\G,\\
\int_0^T (\bD_\G \bun, \bD_\G \bwN)_\G & \xrightarrow{n\to\infty} \int_0^T (\bD_\G \bu, \bD_\G \bwN)_\G \xrightarrow{N\to\infty} \int_0^T (\bD_\G \bu, \bD_\G \bw)_\G.
\end{aligned}
\end{equation}
Regarding the second term in \eqref{integrated-galerkin-momentum}, using the in-space integration-by-parts formula \cite[Lemma 3.3]{BouckNochettoYushutin2024} (valid by density) and \eqref{eq:strong-conv-u3} we obtain
\begin{multline}\label{momentum-to-limit-2}
\int_0^T ( (\nablaG \bun) \bun, \bwN )_\G
= - \int_0^T ( \nablaG \bwN, \bun \otimes \bun )_\G
\xrightarrow{n\to\infty}
- \int_0^T ( \nablaG \bwN, \bu \otimes \bu)_\G\\
\xrightarrow{N \to \infty}
- \int_0^T ( \nablaG \bw, \bu \otimes \bu)_\G
= \int_0^T ( (\nablaG \bu) \bu, \bw )_\G.
\end{multline}
On the other hand, recalling definition \eqref{Sigmas-discrete} of $\bSigma_{n,\G}$ and using convergences $\bQn \to \bQ$ in $\bbL^4([0,T] \times \Gamma)$ (cf.~\eqref{eq:strong-conv-Q2}) and \eqref{H-weak-conv}, we apply \cite[Proposition 3.5, (iv)]{Brezis2011} to get
\begin{multline}\label{momentum-to-limit-3}
\int_0^T (\bSigma_{n,\Gamma}[\bQn], \nablaG(\bwN))_\G
= \int_0^T (\bSigma_{n,\Gamma}[\bQn], \bW_\G(\bwN))_\G
= 2 \int_0^T (\bQn \pi_n \bH[\bQn], \bW_\G(\bwN))_\G\\
= 2 \int_0^T (\pi_n \bH[\bQn], \bQn \bW_\G(\bwN))_\G
\xrightarrow{n\to\infty} 2 \int_0^T (\bH[\bQ], \bQ \bW_\G(\bwN))_\G\\
\xrightarrow{N\to\infty} 2 \int_0^T (\bH[\bQ], \bQ \bW_\G(\bw))_\G
= \int_0^T (\bSigma_{\Gamma}[\bQ], \nablaG \bw)_\G,
\end{multline}
where in the last limit we have used the convergence $\bW_\G(\bwN) \to \bW_\G(\bw)$ in $\CC([0,T];\bL^4)$.
By recalling that $\bB$ and $\bnu$ are (time-independent) and bounded, we also obtain via similar arguments that
\begin{equation}\label{momentum-to-limit-4}
\int_0^T (\bB \bSigma_n[\bQn] \bnu, \bwN)_\G
\xrightarrow{n\to\infty} \int_0^T (\bB \bSigma[\bQ] \bnu, \bwN)_\G
\xrightarrow{N\to\infty} \int_0^T (\bB \bSigma[\bQ] \bnu, \bw)_\G.
\end{equation}
Recalling the weak convergence \eqref{H-weak-conv} and noticing that $(\nabla_M \bQn) \, \bwN \xrightarrow{n\to\infty} (\nabla_M \bQ) \, \bwN$ in $\LL^2([0,T];\bbL^2)$ because of \eqref{eq:strong-conv-Q1} and $\bwN \in \CC([0,T];\bH^2) \hookrightarrow \CC([0,T]; \bL^\infty)$, we arrive at
\begin{multline}\label{momentum-to-limit-5}
\int_0^T (\pi_n\bH[\bQn] : \nabla_M \bQn,\bwN)_\G
= \int_0^T (\pi_n\bH[\bQn],(\nabla_M \bQn) \bwN)_\G
\xrightarrow{n\to\infty} \int_0^T (\bH[\bQ],(\nabla_M \bQ) \bwN)_\G\\
\xrightarrow{N\to\infty} \int_0^T (\bH[\bQ],(\nabla_M \bQ) \bw)_\G
= \int_0^T (\bH[\bQ] : \nabla_M \bQ,\bw)_\G,
\end{multline}
Finally, by standard properties of the orthogonal projector $\sP_n$ we obtain
\begin{equation}\label{momentum-to-limit-6}
(\bun_0,\bwN(0))_\G
= (\sP_n \bu_0,\bwN(0))_\G
\xrightarrow{n\to\infty} (\bu_0,\bwN(0))_\G
\xrightarrow{N\to\infty} (\bu_0,\bw(0))_\G.
\end{equation}
Altogether, equations \eqref{momentum-to-limit-ALL} let us pass to the limit in \eqref{integrated-galerkin-momentum} and conclude that $(\bu,\bQ)$ satisfies \eqref{eq:integrated-momentum}.
\end{subequations}

\subsubsection{Step 3: liquid-crystal kinematic equation satisfied by \texorpdfstring{$(\bu,\bQ)$}{(u,Q)}}\label{sec:3step-LCkinematics}
Let $\bPsi \in C^1([0,T];\bH^1_S)$ be such that $\bPsi(T) = \mathbf{0}$.
For any $t \in [0,T]$, we write
\begin{equation*}
\bPsi(t) = \sum_{i=1}^\infty h_i(t) \bAi,
\end{equation*}
with $h_i(t) = (\bPsi(t),\bAi)_\G$ and the series converging in $C([0,T];\bbH^m)$, $m \in \{0,1\}$ (cf.~\autoref{lemma:divgrad-eigen}).
Notice that $h_i \in C^1([0,T])$, $h_i(T) = 0$ for each $i \in \bbN$.
Moreover, it holds that $\partial_t \bPsi(t) = \sum_{i=1}^\infty h_i'(t) \bAi \in C([0,T];\bbH^1_S)$ with the series also converging in the previously mentioned norms.

For a fixed $N \leq n$ define $\bPsiN := \sum_{i=1}^N h_i \bAi$.
Taking $\bPsiN(t) \in \cQ_n$ for each $t \in [0,T]$ in \eqref{galerkin-LCkinematics}, integrating in time (including an integration-by-parts in the first term), and using the initial condition $\bQn(0) = \bQn_0$, we arrive at
\begin{multline}\label{integrated-galerkin-LCkinematics}
- \int_0^T (\bQn, \partial_t \bPsiN)_\G + \int_0^T ((\nabla_M \bQn) \bun, \bPsiN)_\G - \int_0^T (\bS[\bQn,\bun],\bPsiN)_\G\\
= M\int_0^T (\bH[\bQn],\bPsiN)_\G + (\bQn_0,\bPsiN(0)).
\end{multline}
We can pass to the limit in the first term of the left-hand side of \eqref{integrated-galerkin-LCkinematics} because of \eqref{eq:strong-conv-Q1}, whereas for the second term we use the convergences $\nabla_M \bQn \rightharpoonup \nabla_M \bQ$ in $\LL^2([0,T];\LL^4(\G, \RR^{(d+1)^3}))$ (cf.~\eqref{eq:weak-conv-Q2}), \eqref{eq:strong-conv-u1} and $\bPsiN \to \bPsi$ in $\CC([0,T];\bbL^4)$;
namely
\begin{subequations}\label{kinematics-to-limit-ALL}
\begin{align}
\label{kinematics-to-limit-1} & \int_0^T (\bQn, \partial_t \bPsiN)_\G
\xrightarrow{n\to\infty} \int_0^T (\bQ, \partial_t \bPsiN)_\G
\xrightarrow{N\to\infty} \int_0^T (\bQ, \partial_t \bPsi)_\G
,\\
\label{kinematics-to-limit-2} & \int_0^T ((\nabla_M \bQn) \bun, \bPsiN)_\G
\xrightarrow{n\to\infty} \int_0^T ((\nabla_M \bQ) \bu, \bPsiN)_\G
\xrightarrow{N\to\infty} \int_0^T ((\nabla_M \bQ) \bu, \bPsi)_\G.
\end{align}
On the other hand, recalling that $\bS = \bS_\G + \bS_*$ along with the definitions \eqref{S-def} of $\bS_\G$ and $\bS_*$ and the definition  \eqref{starspintensor} of $\bW_*(\bv) = \bB \bv \otimes \bnu - \bnu \otimes \bB \bv$ (with $\bB$ and $\bnu$ uniformly bounded), convergences \eqref{eq:strong-conv-Q1}, \eqref{eq:weak-conv-u1}, \eqref{eq:weak-conv-u2} and \eqref{eq:strong-conv-u1} and the continuous injection $\bbH^1 \hookrightarrow \bbL^4$ imply
\begin{multline}\label{kinematics-to-limit-3}
\int_0^T (\bS_\G[\bQn,\bun],\bPsiN)_\G
= \int_0^T (\bW_\G(\bun) \bQn - \bQn \bW_\G(\bun) ,\bPsiN)_\G\\
\xrightarrow{n\to\infty} \int_0^T (\bW_\G(\bu) \bQ - \bQ \bW_\G(\bu) ,\bPsiN)_\G
\xrightarrow{N\to\infty} \int_0^T (\bS_\G[\bQ] ,\bPsi)_\G 
\end{multline}
and
\begin{multline}\label{kinematics-to-limit-4}
\int_0^T (\bS_*[\bQn,\bun],\bPsiN)_\G
= \int_0^T (\bW_*(\bun) \bQn - \bQn \bW_*(\bun) ,\bPsiN)_\G\\
\xrightarrow{n\to\infty} \int_0^T (\bW_*(\bu) \bQ - \bQ \bW_*(\bu) ,\bPsiN)_\G
\xrightarrow{N\to\infty} \int_0^T (\bS_*[\bQ] ,\bPsi)_\G.
\end{multline}
Finally, the weak convergence \eqref{H-weak-conv} and classical properties of the orthogonal projector $\pi_n$ enable us to treat the terms in the right-hand side of \eqref{integrated-galerkin-LCkinematics};
more precisely we have
\begin{align}
\label{kinematics-to-limit-5}
\int_0^T (\bH[\bQn],\bPsiN)_\G
\xrightarrow{n\to\infty} \int_0^T (\bH[\bQ],\bPsiN)_\G
\xrightarrow{N\to\infty} \int_0^T (\bH[\bQ],\bPsi)_\G,\\
\label{kinematics-to-limit-6} 
(\bQn_0,\bPsiN(0))_\G
= (\pi_n \bQ_0,\bPsiN(0))_\G
\xrightarrow{n\to\infty} (\bQ_0,\bPsiN(0))_\G
\xrightarrow{N\to\infty} (\bQ_0,\bPsi(0))_\G.
\end{align}
\end{subequations}
In this way, identities \eqref{kinematics-to-limit-ALL} let us pass to the limit in \eqref{integrated-galerkin-LCkinematics} to deduce that $(\bu,\bQ)$ satisfies \eqref{eq:integrated-LCkinematics}.

\subsubsection{Step 4: energy inequality}\label{sec:4step-Eineq}
Recall from \eqref{discrete-apriori} that the Galerkin solution $(\bun,\bQn)$ satisfies the following discrete energy inequality
\begin{equation}
\tag{\ref{discrete-apriori}}
E_{LdG}[\bQn(t)] + K[\bun(t)] + \int_0^t \Big(2\mu  \|\bD_\G \bun(s) \|^2 + M \|\pi_n \bH[\bQn(s)]\|^2\Big) \dd s
\leq E_{LdG}[\bQn_0] + K[\bun_0].
\end{equation}
Standard properties of the weak and weak-* convergences \cite[Proposition 3.5-(iii) and Proposition 3.13-(iii)]{Brezis2011} applied to \eqref{eq:weak-conv-Q5}, \eqref{eq:weak-conv-u3}, \eqref{eq:weak-conv-u2} and \eqref{H-weak-conv} imply
\begin{subequations}\label{energy-to-lim-ALL}
\begin{equation}
\begin{aligned}
& \|\nabla_M \bQ\|_{\LL^\infty([0,T];\bbL^2(\G,\RR^{(d+1)^3}))} \leq \liminf_{n\to\infty} \|\nabla_M \bQn\|_{\LL^\infty([0,T];\bbL^2(\G,\RR^{(d+1)^3}))},\\
& \|\bu\|_{\LL^\infty([0,T];\bL^2)} \leq \liminf_{n\to\infty} \|\bun\|_{\LL^\infty([0,T];\bL^2)},\\
& \|\bD_\G \bu\|_{\LL^2([0,T];\bL^2)} \leq \liminf_{n\to\infty} \|\bD_\G \bun\|_{\LL^2([0,T];\bL^2)},\\
& \|\bH[\bQ]\|_{\LL^2([0,T];\bbL^2)} \leq \liminf_{n \to \infty} \|\pi_n \bH[\bQn]\|_{\LL^2([0,T];\bbL^2)}.
\end{aligned}
\end{equation}
In turn, recalling the strong convergence \eqref{eq:strong-conv-Q2} for $p=4$, for every time $t$, the generalized Lebesgue's dominated convergence theorem \cite[Theorem 1.20]{EvansGariepy1992} with dominating function $g_n := c_3 |\bQn|^4 + c_4$ (cf.~\eqref{eq:F-cubic-bounds}), implies that
\begin{equation}
\int_\Gamma F[\bQ(t)] = \lim_{n\to\infty} \int_\G F[\bQn(t)], \qquad \forall t \in (0,T).
\end{equation}
On the other hand, the $\bbH^1$-continuity of $\pi_n$, the continuous injection $\bbH^1 \hookrightarrow \bbL^4$, and the $\bL^2$-continuity of $\sP_n$ yield
\begin{equation}
\begin{aligned}
E_{LdG}[\bQ_0] & = \lim_{n\to\infty} E_{LdG}[\bQn_0],\\
K[\bu_0] & = \lim_{n\to\infty} K[\bun_0].
\end{aligned}
\end{equation}
Collecting identities \eqref{energy-to-lim-ALL}, we can now take $\liminf_{n\to\infty}$ in \eqref{discrete-apriori} to conclude that $(\bu,\bQ)$ satisfies the energy inequality \eqref{eq:continuous-apriori}.
\end{subequations}

\subsection{Proof of \autoref{thm:pressure-recovery}: recovery of the pressure field \texorpdfstring{$\uppi$}{π}}\label{sec:proof-recovery_pressure}
Fix $\varepsilon > 0$ and let $p := (d+\varepsilon)^*$.
Due to the strong nonlinearities of the Beris--Edwards system \eqref{eq:BE-system}, we now abandon the $\LL^2$ setting in space for the strictly weaker $\LL^p$-based setting.
By virtue of \autoref{prop:Helmholtzdecomposition} (Helmholtz--Weyl decomposition on $\G$), each $\bw \in \CC^1([0,T];\bW^{2,p}_t)$ can be uniquely decomposed as $\bw = \overline{\bw} + \nabla_\G \phi$, where $\overline{\bw} \in \CC^1([0,T];\bW^{2,p}_{t,\sigma})$ and $\phi \in \CC^1([0,T];\WW^{3,p}_\#)$.
Consequently, recalling the incompressibility condition $\divG \bu = 0$ and \autoref{thm:existence-weak-solution} (existence of weak solutions), it is necessary and sufficient to show the existence of $\uppi \in \LL^2([0,T];\LL^p_\#)$ (in principle dependent on $\varepsilon$) such that
\begin{multline}\label{int_mom-pressure-gradphi}
- \rho \int_0^T (\bu,\partial_t\nablaG\phi)_\G + \rho \int_0^T ( (\nablaG \bu) \bu, \nablaG\phi )_\G + 2 \mu \int_0^T (\bD_\G \bu, \bD_\G \nablaG\phi)_\G - \rho \int_0^T (\uppi,\DeltaG\phi)_\G\\
+ \int_0^T (\bSigma_{\Gamma}[\bQ], \nablaG \nablaG\phi)_\G + \int_0^T (\bH[\bQ] : \nabla_M \bQ,\nablaG\phi)_\G + 2 \int_0^T (\bB \bSigma[\bQ] \bnu, \nablaG\phi)_\G
= \rho(\bu_0,\nablaG\phi(0))_\G
\end{multline}
for any $\phi \in \CC^1([0,T];\WW^{3,p^*}_\#)$ such that $\nablaG\phi(T) = \mathbf{0}$.
Notice that because of the condition $\divG \bu_0 = 0$, it follows that $(\bu_0, \nablaG\phi(t)) = 0$ for each $t \in [0,T]$;
consequently, we can drop the final-time requirement on $\nablaG\phi$.
It is possible to further simplify \eqref{int_mom-pressure-gradphi}.
Indeed, since time and spatial derivatives commute and because of the incompresibility condition on $\bu$, it follows that
\begin{equation*}
(\bu,\partial_t \nablaG\phi)_\G = (\bu, \nablaG \partial_t\phi)_\G = 0, \qquad \text{a.e.~on $(0,T)$}.
\end{equation*}
In turn, due to the symmetry of the covariant Hessian $\nablaG\nablaG \phi$ (see, e.g.~\cite[Proposition B.2]{BenavidesNochettoShakipov2025-b}), the skew-symmetry of $\bSigma_\G[\bQ]$ and identity \cite[Lemma 2.13]{BenavidesNochettoShakipov2025-b} we have that
\begin{equation*}
\int_0^T (\bD_\G \bu, \bD_\G \nablaG\phi)_\G
= \int_0^T (\bD_\G \bu, \nablaG \nablaG\phi)_\G
= \int_0^T (\nablaG \bu, \nablaG \nablaG\phi)_\G
= -\int_0^T ( (\tr(\bB)\bB - \bB^2)\bu , \nablaG\phi )_\G
\end{equation*}
and
\begin{equation*}
\bSigma_{\Gamma}[\bQ] : \nablaG \nablaG\phi
= 0, \qquad \text{a.e.~on $(0,T)$, a.e.~on $\G$}.
\end{equation*}
In this way, in order to establish the existence and uniqueness of $\pi \in \LL^2([0,T];\LL^p_\#)$ that satisfies \eqref{int_mom-pressure-gradphi} (and so \eqref{eq:integrated-momentum-withpressure} as well), it suffices to establish it for
\begin{equation}\label{int_mom-pressure-gradphi_simplified}
- \rho\int_0^T (\uppi,\DeltaG\phi)_\G
= \int_0^T (\bF[\bu,\bQ], \nablaG\phi)_\G
, \qquad \forall \phi \in \CC^1([0,T];\WW^{3,p^*}_\#),
\end{equation}
where $\bF[\bu,\bQ] := -\rho\,(\nablaG \bu) \bu + 2\mu\,(\tr(\bB)\bB - \bB^2)\bu - \bH[\bQ] : \nabla_M \bQ - 2\bB \bSigma[\bQ] \bnu$.
We claim that $\bF[\bu,\bQ] \in \LL^2([0,T];(\bW^{1,d+\varepsilon}_t)')\cong (\LL^2([0,T];\bW^{1,d+\varepsilon}_t)'$ for every $\varepsilon \in (0,1)$.
In fact, for every $\bv \in \LL^2([0,T];\bW^{1,d+\varepsilon}_t)$, Holder's inequalities and Sobolev embeddings $\bW^{1,d+\varepsilon} \hookrightarrow \bL^\infty \hookrightarrow \bL^3$ and $\bbH^1 \hookrightarrow \bbL^6$ imply that
\begin{subequations}\label{estimate-F}
\begin{multline}
\int_0^T \left| ((\nablaG \bu)\bu, \bv)_\G  \right|
= \int_0^T \left| (\bu \otimes \bu, \nablaG \bv)  \right|
\leq \|\bu\otimes\bu\|_{\LL^2([0,T];\bbL^{d^*})} \|\nablaG\bv\|_{\LL^2([0,T];\bbL^d)}\\
\leq \|\bu\|_{\LL^2([0,T];\bL^{2d^*})}^2 \|\bv\|_{\LL^2([0,T];\bW^{1,d})}
\leq C_\varepsilon \|\bu\|_{\LL^2([0,T];\bH^1)}^2 \|\bv\|_{\LL^2([0,T];\bW^{1,d+\varepsilon})},
\end{multline}
\begin{equation}
\int_0^T \left| ((\tr(\bB)\bB - \bB^2)\bu,\bv)_\G  \right|
\lesssim \|\bu\|_{\LL^2([0,T];\bL^2)} \|\bv\|_{\LL^2([0,T];\bL^2)}
\leq C_\varepsilon \|\bu\|_{\LL^2([0,T];\bL^2)} \|\bv\|_{\LL^2([0,T];\bW^{1,d+\varepsilon})},
\end{equation}
\begin{multline}
\int_0^T \left| (\bH[\bQ]:\nabla_M \bQ ,\bv)_\G  \right|
\leq \|\bH[\bQ]\|_{\LL^2([0,T];\bbL^2)} \|\nabla_M \bQ\|_{\LL^\infty([0,T];\bbL^2)} \|\bv\|_{\LL^2([0,T];\bL^\infty)}\\
\leq C_\varepsilon \|\bH[\bQ]\|_{\LL^2([0,T];\bbL^2)} \|\nabla_M \bQ\|_{\LL^\infty([0,T];\bbL^2)} \|\bv\|_{\LL^2([0,T];\bW^{1,d+\varepsilon})}
\end{multline}
and
\begin{multline}
\int_0^T \left| (\bB \bSigma[\bQ] \bnu, \bv)_\G \right|
\stackrel{\eqref{Sigma-def}}{\lesssim} \|\bQ\|_{\LL^\infty([0,T];\bbL^6)} \|\bH[\bQ]\|_{\LL^2([0,T];\bbL^2)} \|\bv\|_{\LL^2([0,T];\bL^3)}\\
\leq C_\varepsilon \|\bQ\|_{\LL^\infty([0,T];\bbH^1)} \|\bH[\bQ]\|_{\LL^2([0,T];\bbL^2)} \|\bv\|_{\LL^2([0,T];\bW^{1,d+\varepsilon})}.
\end{multline}
\end{subequations}
Altogether, and considering the regularity of $(\bu,\bQ)$, we deduce from \eqref{estimate-F} that $\bF[\bu,\bQ] \in \LL^2([0,T];(\bW^{1,d+\varepsilon}_t)')$, as previously stated.
Now, observe that for a.e.~$t \in (0,T)$, $\bF[\bu,\bQ]\big\rvert_t \in (\bW^{1,d+\varepsilon}_t)'$.
Consequently, a straightforward combination of the well-posedness of the ultra-weak formulation of the stationary Laplace--Beltrami operator $\DeltaG$ \cite[Lemma 4.3]{BenavidesNochettoShakipov2025-b} and the Banach--Ne\v{c}as--Babu\v{s}ka theorem \cite[Theorem 2.6]{ErnGuermond2004} yields the existence of a unique $\uppi \in \LL^2([0,T];\LL^{(d+\varepsilon)^*}_\#)$ such that
\begin{equation}\label{eq:problem-for-pressure}
- \rho\int_0^T (\uppi,\DeltaG\phi)_\G
= \int_0^T \langle \bF[\bu,\bQ], \nablaG\phi \rangle_{(\bW^{1,d+\varepsilon}_t)' \times \bW^{1,d+\varepsilon}_t}
= \int_0^T (\bF[\bu,\bQ], \nablaG\phi)_\G
, \qquad \forall \phi \in \LL^2([0,T];\WW^{2,d+\varepsilon}_\#).
\end{equation}
In particular, $\uppi$ satisfies \eqref{int_mom-pressure-gradphi_simplified} with $p = (d+\varepsilon)^*$ (and therefore \eqref{int_mom-pressure-gradphi} and \eqref{eq:integrated-momentum-withpressure} as well).
Moreover, $\uppi$ satisfies the estimate
\begin{equation*}
\rho \|\uppi\|_{\LL^2([0,T];\LL^{(d+\varepsilon)^*}_\#)}
\leq \wt C_\varepsilon \|\bF[\bu,\bQ]\|_{(\bW^{1,d+\varepsilon}_t)'},\\
\end{equation*}
which, combined with \eqref{estimate-F}, yields \eqref{eq:pressure-apriori}.

Finally, the existence of a uniform-in-$\varepsilon$ pressure field $\uppi$ follows from noticing that 
\begin{align*}
\LL^2([0,T];\LL^{(d+\varepsilon_1)^*}_\#) & \subseteq \LL^2([0,T];\LL^{(d+\varepsilon_2)^*}_\#),\\
\LL^2([0,T];\WW^{2,d+\varepsilon_1}_\#) & \supseteq \LL^2([0,T];\WW^{2,d+\varepsilon_2}_\#)
\end{align*} if $0 < \varepsilon_1 < \varepsilon_2$, and recalling the existence (for a given $\varepsilon$) of a unique solution of problem \eqref{eq:problem-for-pressure}.

\section*{Acknowledgements}
All three authors were partially supported by the NSF Grants DMS-1908267 and DMS-2512392.

\bibliographystyle{abbrv}
\bibliography{references}

\appendix

\section{Formal derivation of the weak formulation}\label{sec:formal-wf}

In this section we utilize integration-by-parts formulas \eqref{eq:int-by-parts}, \eqref{covariantIBP-matrices}, \eqref{externalIBP-matrices} to formally derive a continuous-in-time variational formulation for the surface Beris--Edwards system \eqref{eq:BE-system} that inspires \autoref{def:weak-solution} (weak solution).
These calculations, albeit being relatively simple, are not present in \cite{BouckNochettoYushutin2024} and therefore we include them in this paper for the sake of completeness.

In what follows all functions involved are assumed to be sufficiently smooth.
Assume that $\bu$ is tangential to $\Gamma$ and that it satisfies the incompressibility condition \eqref{incompressibility}.
Then, for any tangential and $\divG$-free function $\bv$, we have
\begin{equation*}
(\bP \divG \bD_\G \bu, \bv)_\G
= (\divG \bD_\G \bu, \bv)_\G
\stackrel{\eqref{covariantIBP-matrices}}{=} - (\bD_\G \bu, \bD_M \bv)_\G + ( \tr(\bB) \, \bD_\G (\bu) \bnu, \bv )_\G.
\end{equation*}
Because of the identity $\bD_\G \bu = \bP \bD_M(\bu) \bP$ it holds that $\bD_\G \bu :  \bD_M \bv = \bD_\G \bu : \bD_\G \bv$ and $\bD_\G (\bu) \bnu = \mathbf{0}$, whence we finally obtain
\begin{equation}\label{proto-weakformulation-1}
(\bP \divG \bD_\G \bu, \bv)_\G = - (\bD_\G \bu, \bD_\G \bv)_\G.
\end{equation}
By testing $\bf_\G + \bf_*$ with $\bv$ we can write
\begin{equation*}
(\bf_\G,\bv)_\G + (\bf_*,\bv)_\G
= (- \bP \divG \bSigma_\G, \bv)_\G + (\bH : \nabla_M \bQ,\bv)_\G + 2 (\bB \bSigma \bnu, \bv)_\G.
\end{equation*}
The first term in the right-hand-side can be rewritten upon utilizing integration-by-parts, and by noticing that $\bSigma_\G: \nabla_M \bv = \bSigma_\G: \nablaG \bv$ and $\bSigma_\G \bnu = \mathbf{0}$, because $\bSigma_\G = \bP \bSigma \bP$, obtaining
\begin{equation*}
(- \bP \divG \bSigma_\G, \bv)_\G
= (- \divG \bSigma_\G, \bv)_\G
\stackrel{\eqref{covariantIBP-matrices}}{=} (\bSigma_\G, \nabla_M \bv)_\G + (\tr(\bB) \bSigma_\G \bnu, \bv)_\G
= (\bSigma_\G, \nablaG \bv)_\G.
\end{equation*}
Thus,
\begin{equation}\label{proto-weakformulation-2}
(\bf_\G,\bv)_\G + (\bf_*,\bv)_\G = (\bSigma_\G, \nablaG \bv)_\G + (\bH : \nabla_M \bQ,\bv)_\G + 2 (\bB \bSigma \bnu, \bv)_\G.
\end{equation}
By collecting \eqref{proto-weakformulation-1} and \eqref{proto-weakformulation-2}, noticing that $(\nablaG \uppi, \bv)_\G = 0$ for $\bv$ $\divG$-free, and by integration-by-parts formula \eqref{eq:int-by-parts} we arrive at the following weak form of the momentum equation \eqref{momentum-eq}:
\begin{equation}\label{continuous-weakmomentum}
\rho\Big\{ (\partial_t \bu, \bv)_\G
+ \left( (\nablaG \bu)\bu, \bv \right)_\G \Big\}
+ 2\mu (\bD_\G \bu, \bD_\G \bv)_\G
= -(\bSigma_\G[\bQ], \nablaG \bv)_\G - (\bH[\bQ] : \nabla_M \bQ,\bv)_\G - 2 (\bB \bSigma[\bQ] \bnu, \bv)_\G.
\end{equation}
For any $\bT: \Gamma \rightarrow \RR^{(d+1)\times(d+1)}$, we weakly impose the liquid-crystal kinematic equation \eqref{LC-kinematics} as
\begin{equation}\label{continuous-weakkinematics}
(\partial_t \bQ, \bT)_\G + ((\nabla_M \bQ) \bu, \bT)_\G = M (\bH[\bQ],\bT)_\G + (\bS[\bQ,\bu],\bT)_\G.
\end{equation}
By integrating \eqref{continuous-weakmomentum} and \eqref{continuous-weakkinematics} in time, and integrating by parts so as to transfer the time derivatives to the test functions $\bv$ and $\bT$, we respectively arrive at the formal analogues of equations \eqref{continuous-weakmomentum} and \eqref{continuous-weakkinematics} in \autoref{def:weak-solution}.
Notice that, should we not restrict $\bv$ to satisfy $\divG \bv = 0$, a term involving the pressure field $\uppi$ would be present in \eqref{continuous-weakmomentum}, which after being integrated in time, would lead to the formal analogue of \eqref{eq:integrated-momentum-withpressure} in \autoref{thm:pressure-recovery} (recovery of the pressure field).
\end{document}